\newtheorem{thm}{Theorem}[section]
\newtheorem{lem}[thm]{Lemma}
\newtheorem{prop}[thm]{Proposition}
\newtheorem{cor}[thm]{Corollary}
\newtheorem{df}[thm]{Definition\rm}
\newtheorem{rem}{\it Remark\/}
\providecommand*{\shuffle}{%
  \mathbin{\mathpalette\shuffle@{}}%
}
\newcommand*{\shuffle@}[2]{%
  \sbox0{$#1\vcenter{}$}%
  \kern .15\ht0 
  \rlap{\vrule height .25\ht0 depth 0pt width 2.5\ht0}%
  \raise.1\ht0\hbox to 2.5\ht0{%
    \vrule height 1.75\ht0 depth -.1\ht0 width .17\ht0 %
    \hfill
    \vrule height 1.75\ht0 depth -.1\ht0 width .17\ht0 %
    \hfill
    \vrule height 1.75\ht0 depth -.1\ht0 width .17\ht0 %
  }%
  \kern .15\ht0 
}
\def\dim{\operatorname{dim}}  
\def\ker{\operatorname{ker}}  
\def\N{{\mathbb N}}    
\begin{document}
\title{\bf Types, Codes and TFTs}
\author
{Roland Friedrich\thanks{Saarland University, friedrich@math.uni-sb.de}}
\maketitle
\begin{abstract}
We establish a relation between fully extended $2$-dimensional TQFTs and  recognisable weighted formal languages, rational biprefix codes and lattice TFTs. We show the equivalence of $2D$ closed TFTs and rational exchangeable series and we discuss the important special case of finite groups. Finally, we outline a reformulation in terms of a restricted version of second order monadic logic.   
\end{abstract}
\tableofcontents
\section{Preliminaries}
Let $k$ be a field of characteristic zero. We consider the following categories: $\mathbf{Alg}_k$, of associative and unital $k$-algebras, $\mathbf{cAlg}_k$, associative, commutative and unital $k$-algebras and $\mathbf{fGrp}$, of finite groups. We assume all algebras to be unital and, if necessary, $k$ also to be algebraically closed, which we shall indicate.
\subsection{Rational series}
We recall a few facts about formal series and rational languages from~\cite{BPR2009, BR,Fl1974,R1980}.
Let $X$ be a finite set, called alphabet, and $X^*$ the free monoid generated by $X$, i.e.  the set of all possible words $w$ over $X$, including the empty word $1$. A map $S:X^*\rightarrow k$, $w\mapsto S(w)=:S_w$ is called a formal series, and the $k$-algebra of all formal series is $k\langle\langle X\rangle\rangle$. The subalgebra of polynomials is $k\langle X\rangle$. Equivalently, $k\langle X\rangle$ is the free associative algebra generated by $X$ and $k\langle\langle X\rangle\rangle$ its linear dual, i.e. every formal series $S$ defines a linear functional $S:k\langle X\rangle\rightarrow k$ by $k$-linear extension of the map $w\mapsto S_w$.

The {\bf free commutative monoid} $X^+$ is defined as $X^*/\sim$ where the congruence $\sim$ is generated by the relation $xy\sim yx$ for $x,y\in X$, $x\neq y$.
Let $\alpha:X^*\rightarrow X^+$ denote the canonical epimorphism from the free non-commutative monoid $X^*$ onto the free commutative monoid $X^+$, where $\alpha(w)$ denotes the commutative image of a word $w\in X^*$. Then, by abuse of notation, $\alpha:k\langle\langle X\rangle\rangle\rightarrow k[[X]]$ is the induced extension. 

The notion of an {\bf exchangeable series} was introduced by M. Fliess~\cite{Fl1974} and it is closely related to the notion of exchangeable stochastic processes and Markov chains.
\begin{df}
\label{exser}
Let $S\in k\langle\langle X\rangle\rangle$. $S$ is called {\bf exchangeable} if for all $v,w\in X^*$, with $\alpha(v)=\alpha(w)$, $S_v=S_w$ holds.
\end{df}

\begin{df}
For $A\in\mathbf{Alg}_k$, $X$ a finite set and $S\in k\langle\langle X\rangle\rangle$,  
\begin{itemize}
\item 
the {\bf syntactic ideal} of $S$, denoted $I_S$, is defined as
$$
I_S:=\sup_{J\subset\ker(S)}\{ J~|~\text{two-sided ideal~}\},
$$
i.e. it is the {\em maximal two-sided ideal} contained in $\ker(S)\subset k\langle X\rangle$. 
\item The algebra 
$$
\mathfrak{A}_S:= k\langle X\rangle / I_S,
$$
is called the {\bf syntactic algebra} of $S$, with $\pi:k\langle X\rangle\rightarrow \mathfrak{A}_S$
the {\bf canonical algebra epimorphism}.
\item If $A\cong\mathfrak{A}_S$, for some $S$ then $A$ is called {\bf syntactic}.
\item If $A\cong\mathfrak{A}_S$, for some $S$ and $\dim_k(\mathfrak{A}_S)<\infty$, then we call $A$ {\bf rational syntactic}.
\end{itemize}
\end{df}
A codimension one linear subspace $H$ of a $k$-vector space $V$ is called a {\bf hyperplane}, i.e. $\dim_k(V/H)=1$. In particular $H$ contains the origin, i.e. $0\in H$. For the classical relation between linear functionals, their nullspaces and hyperplanes, cf. e.g. the monograph~\cite{BP2012}.

The following fundamental statements have originally been established by Ch. Reutenauer and correspond to [\cite{R1980} Proposition I.2.4 and Théorème II.1.2.]
\begin{thm}[Reutenauer~\cite{R1980}]
\label{Syntactic_hyperplane}
Let $X$ be a finite alphabet and $A\in\mathbf{Alg}_k$, a {\rm finitely generated} algebra, $A\neq0$. Then the following statements hold:
\begin{enumerate}
\item $A$ is syntactic iff there exists a hyperplane $H\subset A$, which contains no two-sided ideal other than $\{0\}$.
\item A formal series $S\in k\langle X\rangle^*$ is rational iff $\dim_k(\mathfrak{A}_S)<\infty$, i.e.  the syntactic algebra $\mathfrak{A}_S$ of $S$  is a finite-dimensional $k$-vector space.
\end{enumerate}  
\end{thm}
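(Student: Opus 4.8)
The plan is to base both parts on one structural description of the syntactic ideal. First I would record that
$$
I_S=\{\,p\in k\langle X\rangle : S(u\,p\,v)=0 \text{ for all } u,v\in X^*\,\}.
$$
This is immediate from the definition: a two-sided ideal $J\subset\ker(S)$ forces $S(upv)=0$ whenever $p\in J$, since $upv\in J$; conversely the right-hand side is itself a two-sided ideal contained in $\ker(S)$ (take $u=v=1$) and contains every such $J$, so it is the maximal one, i.e. $I_S$. Thus $I_S$ is the largest shift-invariant subspace of $\ker(S)$, and this is the common engine for both claims.

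For the first statement, in the forward direction I would use that $I_S\subset\ker(S)$ lets $S$ descend to a nonzero functional $\bar S$ on $\mathfrak{A}_S$ (nonzero because $A\neq0$), and set $H:=\ker(\bar S)=\ker(S)/I_S$, a hyperplane of $\mathfrak{A}_S$. If a two-sided ideal $J\subset\mathfrak{A}_S$ lay in $H$, its preimage $\pi^{-1}(J)$ would be a two-sided ideal of $k\langle X\rangle$ contained in $\ker(S)$ and containing $I_S$; maximality of $I_S$ forces $\pi^{-1}(J)=I_S$, i.e. $J=\{0\}$. Conversely, given a hyperplane $H\subset A$ with no nonzero two-sided ideal, I would invoke finite generation of $A$ to fix a presentation $\varphi:k\langle X\rangle\to A$, choose $\lambda:A\to k$ with $\ker(\lambda)=H$, and define $S:=\lambda\circ\varphi$. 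Then $\ker(\varphi)\subset\ker(S)$ gives $\ker(\varphi)\subset I_S$, while $\varphi(I_S)$ is a two-sided ideal of $A$ contained in $H$, hence zero by hypothesis, giving $I_S\subset\ker(\varphi)$; so $I_S=\ker(\varphi)$ and $\mathfrak{A}_S\cong A$.

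For the second statement I would invoke the Kleene--Sch\"utzenberger equivalence between rational and recognizable series. If $S$ is rational, a finite-dimensional representation $S_w=\lambda\,\mu(w)\,\gamma$ with $\mu:k\langle X\rangle\to\End(V)$ an algebra map yields $\ker(\mu)\subset I_S$, since $\mu(p)=0$ implies $S(upv)=\lambda\mu(u)\mu(p)\mu(v)\gamma=0$; hence $\mathfrak{A}_S$ is a quotient of $\mu(k\langle X\rangle)\subset\End(V)$ and $\dim_k\mathfrak{A}_S\le(\dim_k V)^2<\infty$. For the converse, assuming $\dim_k\mathfrak{A}_S<\infty$, I would take $V:=\mathfrak{A}_S$ with its left-regular representation, compose with $\pi$ to obtain $\mu:k\langle X\rangle\to\End(V)$, and set $\gamma:=\pi(1)$ and $\lambda:=\bar S$; then $\lambda\,\mu(w)\,\gamma=\bar S(\pi(w))=S_w$ exhibits $S$ as recognizable of finite dimension, hence rational.

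Statement 1 is routine ideal theory once the description of $I_S$ is available; the real content sits in statement 2, where the main obstacle is the passage between a finite-dimensional \emph{syntactic algebra}, defined through two-sided ideals, and a finite-dimensional \emph{linear representation}, which encodes a one-sided action. The bridge is precisely the regular representation of $\mathfrak{A}_S$ together with the bound $\dim_k\mathfrak{A}_S\le(\dim_k V)^2$, plus Sch\"utzenberger's theorem to convert recognizability back into rationality; the point requiring care is that the constructed representation recovers $S$ on all of $X^*$ and not merely on the syntactic quotient.
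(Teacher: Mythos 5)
Your proof is correct; the wrinkle in comparing it to ``the paper's own proof'' is that the paper never proves Theorem~\ref{Syntactic_hyperplane} at all --- it quotes it from Reutenauer [\cite{R1980}, Proposition I.2.4 and Th\'eor\`eme II.1.2] and only spells out two auxiliary facts, Lemma~\ref{surjective_image} and Lemma~\ref{finite_alphabet}. Measured against what the paper does write down, your converse half of statement 1 coincides with the proof of Lemma~\ref{finite_alphabet}: fix a presentation $\varphi\colon k\langle x_1,\dots,x_n\rangle\twoheadrightarrow A$, set $S:=\lambda\circ\varphi$, obtain $\ker(\varphi)\subset I_S$ from maximality, and obtain $I_S\subset\ker(\varphi)$ because $\varphi(I_S)$ is a two-sided ideal lying in $H$ --- exactly the point where the paper invokes Lemma~\ref{surjective_image}. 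The remainder of your argument correctly supplies what the paper delegates to \cite{R1980}, and it is the standard (essentially Reutenauer's) route: the shift-invariant description $I_S=\{p\in k\langle X\rangle : S(upv)=0 \text{ for all } u,v\in X^*\}$ is the right engine and your verification that it is the maximal two-sided ideal in $\ker(S)$ is sound; the forward half of statement 1 (descend $S$ to $\bar S\neq 0$, take $H=\ker(\bar S)$, and kill any ideal $J\subset H$ by pulling back along $\pi$ and using maximality of $I_S$) is correct; and statement 2 works as you say, with $\ker(\mu)\subset I_S$ giving $\dim_k\mathfrak{A}_S\le(\dim_k V)^2$ in one direction and the left regular representation of $\mathfrak{A}_S$ with $\gamma=\pi(1)$, $\lambda=\bar S$ giving a finite-dimensional recognising triple in the other, the Kleene--Sch\"utzenberger equivalence of rational and recognisable being an acceptable, openly cited external input. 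No gaps.
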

In the proof of the first statement above  the following auxiliary facts are used which we shall spell out explicitly. 
\begin{lem}
\label{surjective_image}
If $A,B\in\mathbf{Alg}_k$, $J\subset A$ a two-sided ideal and $\varphi:A\rightarrow B$ a surjective algebra morphism then $\varphi(J)\subset B$ is a two-sided ideal.
\end{lem}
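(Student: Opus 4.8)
The plan is to verify directly the defining properties of a two-sided ideal for the set $\varphi(J)\subset B$, exploiting that $\varphi$ is simultaneously $k$-linear and multiplicative, and — crucially — that it is surjective.

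First I would check that $\varphi(J)$ is a $k$-subspace of $B$. Since $J$ is a two-sided ideal it is in particular a $k$-subspace of $A$, and the $k$-linearity of $\varphi$ carries subspaces to subspaces: for $\varphi(j_1),\varphi(j_2)\in\varphi(J)$ and $\lambda\in k$ one has $\lambda\varphi(j_1)+\varphi(j_2)=\varphi(\lambda j_1+j_2)\in\varphi(J)$, because $\lambda j_1+j_2\in J$.

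Next I would establish absorption under multiplication by arbitrary elements of $B$, which is the only place where surjectivity enters. Let $b\in B$ and $\varphi(j)\in\varphi(J)$ with $j\in J$. By surjectivity choose $a\in A$ with $\varphi(a)=b$. Since $\varphi$ is an algebra morphism, $b\,\varphi(j)=\varphi(a)\varphi(j)=\varphi(aj)$, and because $J$ is a left ideal we have $aj\in J$, whence $b\,\varphi(j)\in\varphi(J)$. The symmetric computation $\varphi(j)\,b=\varphi(ja)\in\varphi(J)$, using that $J$ is a right ideal, handles the other side. Together these give $B\,\varphi(J)\subset\varphi(J)$ and $\varphi(J)\,B\subset\varphi(J)$, so $\varphi(J)$ is a two-sided ideal of $B$.

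There is no genuine obstacle here; the statement is elementary, and the entire content is the substitution $b=\varphi(a)$. The only point worth flagging is that surjectivity is indispensable: without it one could only conclude $b\,\varphi(j)\in\varphi(J)$ for $b$ lying in the image $\varphi(A)$, and the image of an ideal under a non-surjective morphism need not be an ideal — for instance the inclusion $k\hookrightarrow k[x]$ carries the ideal $k$ (the whole source ring) to $k\subset k[x]$, which is not an ideal since $x\cdot 1=x\notin k$. One should also keep the unitality convention of $\mathbf{Alg}_k$ in mind, so that \emph{two-sided ideal} is read in the usual ring-theoretic sense throughout.
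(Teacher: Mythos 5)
Your proof is correct: the subspace check, the substitution $b=\varphi(a)$ via surjectivity, and the absorption computations $b\,\varphi(j)=\varphi(aj)$ and $\varphi(j)\,b=\varphi(ja)$ are exactly the standard argument. The paper states Lemma~\ref{surjective_image} without proof, treating it as a known auxiliary fact, so there is nothing to contrast with; your verification (and your counterexample showing surjectivity is genuinely needed) supplies precisely what the paper takes for granted.
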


\begin{lem}
\label{finite_alphabet}
For $A\in\mathbf{Alg}_k$, $A\neq0$, let $A$ be syntactic  and finitely generated, i.e. $A=k[a_1,\dots,a_n]$, for some $a_i\in A$, $i=1,\dots, n$. Then there exists a formal series $S\in k\langle x_1,\dots, x_n\rangle^*$ such that $A\cong k\langle x_1,\dots, x_n\rangle/I_S$, i.e. the representing alphabet $X$ can be assumed in this case to be finite. 
\end{lem}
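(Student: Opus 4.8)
The plan is to manufacture the series $S$ directly from the two hypotheses on $A$. First, since $A$ is syntactic, Theorem~\ref{Syntactic_hyperplane}(1) supplies a hyperplane $H\subset A$ containing no two-sided ideal other than $\{0\}$. Being of codimension one, $H$ is the kernel of a nonzero linear functional $\lambda:A\rightarrow k$. Second, since $A=k[a_1,\dots,a_n]$ is finitely generated, the assignment $x_i\mapsto a_i$ extends to a surjective algebra morphism $\varphi:k\langle x_1,\dots,x_n\rangle\rightarrow A$. I would then define the candidate series as the composite
$$
S:=\lambda\circ\varphi\in k\langle x_1,\dots,x_n\rangle^*,
$$
which is a linear functional on the free algebra, hence a formal series over the finite alphabet $X=\{x_1,\dots,x_n\}$.

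The core of the argument is to identify $I_S$ with $\ker(\varphi)$, after which the first isomorphism theorem yields $k\langle x_1,\dots,x_n\rangle/I_S=k\langle x_1,\dots,x_n\rangle/\ker(\varphi)\cong A$, as desired. Note first that $\ker(S)=\varphi^{-1}(\ker\lambda)=\varphi^{-1}(H)$. For one inclusion, $\ker(\varphi)$ is a two-sided ideal and, since $0\in H$, it is contained in $\varphi^{-1}(H)=\ker(S)$; by maximality of the syntactic ideal this gives $\ker(\varphi)\subseteq I_S$. For the reverse inclusion I would push $I_S$ forward: by Lemma~\ref{surjective_image} the image $\varphi(I_S)$ is a two-sided ideal of $A$, and since $\varphi$ is onto it satisfies $\varphi(I_S)\subseteq\varphi(\varphi^{-1}(H))=H$. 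But $H$ contains no nonzero two-sided ideal, forcing $\varphi(I_S)=\{0\}$, i.e. $I_S\subseteq\ker(\varphi)$. Combining the two inclusions gives $I_S=\ker(\varphi)$.

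The only delicate point is the reverse inclusion $I_S\subseteq\ker(\varphi)$, and it is precisely here that both the auxiliary lemma and the defining property of $H$ enter: one must know that the forward image of the syntactic ideal under the surjection $\varphi$ remains an ideal (Lemma~\ref{surjective_image}) and that it lands inside the ideal-free hyperplane $H$ (using $\varphi(\varphi^{-1}(H))=H$, valid because $\varphi$ is surjective). Everything else is formal bookkeeping. The upshot is that finite generation allows one to replace an a priori arbitrary representing alphabet by one with exactly $n$ letters, so that a finitely generated syntactic algebra is always representable over a finite alphabet.
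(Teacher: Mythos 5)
Your proof is correct and follows essentially the same route as the paper: build $S=\lambda\circ\varphi$ from the hyperplane given by syntacticity and the surjection given by finite generation, then prove $I_S=\ker(\varphi)$ using maximality of the syntactic ideal in one direction and Lemma~\ref{surjective_image} together with the ideal-free property of $H$ in the other, concluding by the first isomorphism theorem. Your writeup is if anything slightly more explicit than the paper's at the step $\varphi(I_S)\subseteq\varphi(\varphi^{-1}(H))=H$, which the paper asserts without comment.
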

\begin{proof}
As $A$ is syntactic, by~Theorem~\ref{Syntactic_hyperplane} 1., there exists a hyperplane $H\subset A$ and a linear functional $\lambda_H:A\rightarrow k$, induced by $H$, such that $\ker(\lambda_H)=H$. Let $j_a:k\langle x_1,\dots,x_n\rangle\rightarrow A$ be the algebra morphism onto $A$, defined by $j_a(x_i):=a_i$, $i=1,\dots,n$. Define the linear functional $S:k\langle x_1,\dots,x_n\rangle\rightarrow k$ by $S:=\lambda_H\circ j_a$ with corresponding syntactic ideal $I_S$, as shown in the diagram below:

\[
\begin{xy}
  \xymatrix{
      {k\langle x_1,\dots, x_n\rangle}\ar@/^2pc/[rr]^{{S}}\ar[d]^{\pi}\ar@{->>}[r]^{j_a}& {A=k[ a_1,\dots,a_n]}\ar[r]^{\qquad\lambda_H}&k \\
      \frac{k\langle x_1,\dots, x_n\rangle}{\ker(j_a)}\ar[ru]_{\cong}&&
                            }
\end{xy}
\]
Now, $\ker(j_a)$ is a two-sided ideal, $\pi$ the canonical algebra epimorphism and by the ``First Isomorphism Theorem for Rings", it follows that $k\langle x_1,\dots, x_n\rangle /\ker(j_a)\cong A$.
Let us show that 
\begin{equation}
\label{equal_ideals}
\ker({j_a})=I_{S},
\end{equation}
from which 
$$
\mathfrak{A}_S=\frac{k\langle x_1,\dots,x_n\rangle}{I_S}=\frac{k\langle x_1,\dots,x_n\rangle}{\ker(j_A)}\cong A,
$$
and hence, the claim follows. Let us show the equality~(\ref{equal_ideals}). 

We have $\ker(S)=j_a^{-1}(\lambda_H^{-1}(0))=j_a^{-1}(H)$ and hence $j_a^{-1}(0)=\ker(j_a)\subset I_S\subset\ker(S)$. 

By Lemma~\ref{surjective_image}, $j_a(I_{S})\subset H$ is a two-sided ideal and by assumption $H$ does not contain a non-trivial two-sided ideal and hence  $j_a(I_S)=0$. Therefore $I_S\subset\ker(j_a)$ from which~(\ref{equal_ideals}) follows.
\end{proof}
\subsection{Codes}
We recall the following facts from~\cite{BPR2009,BR, R1982}. Every lanugage $L\subset X^*$  over $X$ induces a linear functional $\chi_L$, the {\bf characteristic series} of $L$, defined as 
$\chi_L:k\langle X\rangle\rightarrow k$, by $k$-linear extension of the map $w\mapsto 1$ if $w\in L$ and $0$ otherwise.

Let $X$ be a finite alphabet and $C$ a language, i.e. $C\subset X^*$. Then $C$ is a {\bf code} if the submonoid $C^*$ generated by $C$ is free, with base $C$. The {\bf syntactic algebra} $\mathfrak{A}_{C^*}$ of a code $C$ is the syntactic algebra of $C^*$. A code $C$ is called {\bf rational} if its syntactic algebra satisfies $\dim_k(\mathfrak{A}_{C^*})<\infty$, i.e. it is a finite-dimensional $k$-vector space.

A language $L\subset X^*$ is called {\bf prefix} if for all $u,v\in X^*$ with $u,uv\in L$ it follows that $v=1$, and similarly, $L$ is called {\bf suffix}, if $v,uv\in L$ implies $u=1$. The language $L$ is called {\bf biprefix} if it is both prefix and suffix.

\begin{thm}[\cite{R1982} Theorem 1]
If a code $C$ is rational and biprefix then its syntactic algebra $\mathfrak{A}_{C^*}$, is a finite-dimensional semi-simple $k$-algebra.
\end{thm}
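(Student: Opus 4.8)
The plan is to give $\mathfrak{A}_{C^*}$ a concrete matrix realisation and then reduce semisimplicity to a statement about a single module. Since $C$ is rational, $S:=\chi_{C^*}$ is a rational series, so it admits a minimal linear representation $(\lambda,\mu,\gamma)$ of some finite dimension $n$, with $S(w)=\lambda\mu(w)\gamma$ and $\mu:X^*\to M_n(k)$ a monoid morphism. The first step is to check that $I_S=\ker(\mu)$. Writing $I_S=\{p : S(upv)=0\ \forall u,v\in X^*\}$ (this set is readily seen to be the maximal two-sided ideal inside $\ker(S)$), the inclusion $I_S\subset\ker\mu$ follows from reachability and observability of a minimal representation, because $\{\mu(v)\gamma\}$ spans $k^n$ and $\{\lambda\mu(u)\}$ spans the dual space, so $\lambda\mu(u)\mu(p)\mu(v)\gamma=0$ for all $u,v$ forces $\mu(p)=0$; the reverse inclusion is immediate. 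Hence $\mathfrak{A}_{C^*}\cong\mu(k\langle X\rangle)=:\mathcal A$, the transition algebra, a finite-dimensional subalgebra of $\End(k^n)$ acting faithfully on $V:=k^n$.

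The backbone is the standard criterion that, over a field of characteristic zero, a finite-dimensional algebra is semisimple iff its Jacobson radical vanishes, together with the fact that the radical annihilates every completely reducible module. I would use this in the packaged form: a finite-dimensional algebra possessing a faithful completely reducible module is semisimple, since the radical is nilpotent, kills each simple constituent by Nakayama, hence lies in the annihilator, which is $0$ by faithfulness. Because $V$ is tautologically faithful for $\mathcal A\subset\End(V)$, the whole problem reduces to proving that $V$ is a semisimple $\mathcal A$-module. Equivalently, one may phrase the goal as nondegeneracy of the trace form of the regular representation of $\mathcal A$.

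The only remaining, and genuinely hard, point is to extract complete reducibility of $V$ from the hypothesis that $C$ is biprefix; this is where the prefix-only case breaks down. The prefix property makes $C^*$ left unitary, so $C^*$ is recognised by a deterministic (left-to-right) automaton whose transition monoid is the syntactic monoid $M$ of $C^*$, while dually the suffix property makes $C^*$ right unitary, yielding a codeterministic recognition. I would exploit this two-sided determinism to show that the generators $\mu(x)$ act as injective partial maps on a natural state basis, so that $M$ is an inverse monoid, and then invoke that $k[M]$ is semisimple in characteristic zero; since $\mathcal A$ is a quotient of $k[M]$ through $\mu$, and quotients of semisimple algebras are semisimple, this closes the argument. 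A more self-contained variant would build from the bideterministic structure an $\mathcal A$-invariant nondegenerate pairing identifying $V$ with its dual module, and use its orthogonal complements to split off any submodule. The main obstacle is exactly this translation: turning the combinatorial bi-unitarity of $C^*$ into the algebraic statement that $V$ is self-dual (equivalently, that $M$ is inverse), i.e.\ pinpointing where ``suffix'' supplies the injectivity that ``prefix'' alone does not, while handling the rational rather than merely finite case.
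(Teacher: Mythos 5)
Your reduction is sound as far as it goes: the identification $I_S=\{p\,:\,S(upv)=0\ \forall u,v\in X^*\}=\ker\mu$ for a minimal representation, hence $\mathfrak{A}_{C^*}\cong\mu(k\langle X\rangle)=:\mathcal{A}$ acting faithfully on $V=k^n$, and the criterion that a finite-dimensional algebra with a faithful completely reducible module is semisimple, are all correct and standard. (Note also that the paper does not prove this theorem at all; it recalls it from Reutenauer~\cite{R1982}, so your argument has to stand on its own.) The gap is exactly at the point you yourself flag as the hard step, and your proposed resolution of it is false: biprefixity does \emph{not} imply that the letters act as injective partial maps on the states of the minimal automaton, nor that the syntactic monoid $M$ of $C^*$ is an inverse monoid.

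Concretely, take $X=\{a,b\}$ and $C=\{aa,ba,bb\}$, a finite (hence rational) biprefix code, since all its words have length two. The minimal deterministic automaton of $C^*$ has states $1$ (initial and final), $2=a^{-1}C^*$, $3=b^{-1}C^*$ and a sink, with transitions $1\cdot a=2$, $1\cdot b=3$, $2\cdot a=1$, $2\cdot b=\mathrm{sink}$, $3\cdot a=1$, $3\cdot b=1$. The letter $a$ sends both $2$ and $3$ to $1$, so the automaton is deterministic but not codeterministic. Worse, in the transition monoid, which \emph{is} the syntactic monoid $M$ of $C^*$, the elements $[aa]$ (acting by $1\mapsto1$, $2\mapsto2$, $3\mapsto2$) and $[bb]$ (acting by $1\mapsto1$, $3\mapsto3$, undefined on $2$) are idempotents, yet $[aa][bb]=[aabb]$ (only $1\mapsto1$) differs from $[bb][aa]=[bbaa]$ ($1\mapsto1$, $3\mapsto2$). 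Since idempotents commute in any inverse monoid, $M$ is not inverse, and the chain ``$M$ inverse $\Rightarrow k[M]$ semisimple $\Rightarrow$ its quotient $\mathcal{A}$ semisimple'' collapses at its first link; the self-dual-pairing variant you sketch rests on the same false bideterminism and fails with it. The underlying confusion is that prefixness makes the minimal automaton of $C^*$ deterministic with the initial state as unique final state, while suffixness gives the mirror statement for the \emph{reversed} language, i.e.\ for a different automaton; the two facts do not combine into bideterminism of a single automaton. (The theorem itself of course survives in this example: here the minimal representation has dimension $3$ and one checks that $\mu(k\langle X\rangle)$ is all of $M_3(k)$, which is simple. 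But this also shows that semisimplicity is a feature of the small quotient $\mathcal{A}$ of $k[M]$, not of any monoid-level property such as inverseness; Reutenauer's actual proof has to work directly with the radical of the syntactic algebra, exploiting the prefix and suffix hypotheses on the two sides separately.)
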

\subsection{Frobenius algebras}
Let us recall the following classes of algebras, cf. e.g.~\cite{L1999} and also~\cite{BR,LPf2006}.
\begin{df}
\label{simple_Frob}
Let $A\in\mathbf{Alg}_k$ and $\dim_k(A)<\infty$. Then $A$ is
\begin{enumerate}
\item {\bf simple} if $\{0\}$ and $A$ are the only two-sided ideals in $A$,
\item {\bf semi-simple} if $A\cong M_{n_1}(k)\times\dots\times M_{n_r}(k)$, $r,n_i\in\N^*$, $i\in\{1,\dots,r\}$, or alternatively,  if $\mathfrak{l}\leq A$ is a nilpotent left ideal, then $\mathfrak{l}=0$.
\item {\bf Frobenius} if it satisfies one of the following equivalent characterisations:
\begin{itemize}
\item There exists a bilinear form $B:A\times A\rightarrow k$ which is non-degenerate and associative, i.e. which satisfies
$$
B(ab,c)=B(a,bc)\qquad\forall a,b,c\in A.
$$
\item There exists a linear functional $\lambda:A\rightarrow k$ whose kernel $\ker(\lambda)$ (nullspace)  contains no left or right ideal other than zero.
\item There exists a hyperplane $H\subset A$, $(0\in H)$, which contains no nonzero right ideal (left ideal).
\end{itemize}
\item {\bf symmetric Frobenius} if $\lambda(ab)=\lambda(ba)$, for all $a,b\in A$, i.e. $\lambda$ is a {\bf trace}.
\end{enumerate}
\end{df}

\begin{rem}
If $A$ is Frobenius then $\ker(\lambda)$ does not contain a non-trivial {\em two-sided} ideal as every two-sided ideal is both a left and right ideal.
\end{rem}
\begin{prop}
\label{Semisimple-Frob}
If $A\in\mathbf{Alg}_k$, $\dim_k(A)<\infty$ and $A$ is semi-simple then it can be endowed with the structure of a {\bf symmetric Frobenius algebra}.
\end{prop}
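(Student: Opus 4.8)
The plan is to exploit the explicit Wedderburn-type decomposition built into the definition of semi-simplicity and to assemble the trace from the ordinary matrix traces on the simple factors. Since $A$ is finite-dimensional and semi-simple, there is an isomorphism $A\cong M_{n_1}(k)\times\dots\times M_{n_r}(k)$, so I identify an element $a\in A$ with a tuple $(X_1,\dots,X_r)$ of matrices. I would then define the linear functional $\lambda:A\rightarrow k$ by $\lambda(a)=\sum_{i=1}^{r}\tr(X_i)$, where $\tr$ is the usual matrix trace on each block, and set $B(a,b):=\lambda(ab)$.

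First I would verify that $\lambda$ is a trace, i.e. $\lambda(ab)=\lambda(ba)$ for all $a,b\in A$. This is immediate, because multiplication in $A$ is componentwise and the matrix trace satisfies $\tr(XY)=\tr(YX)$ on each factor $M_{n_i}(k)$; consequently the form $B$ is symmetric and automatically associative, $B(ab,c)=\lambda(abc)=B(a,bc)$. By Definition~\ref{simple_Frob}, it then remains only to establish that $B$ is non-degenerate, after which $A$ together with $\lambda$ is symmetric Frobenius.

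The heart of the argument, and the step requiring the most care, is the non-degeneracy of $B$. Because the factors multiply blockwise and mixed products vanish, $B$ is block-diagonal with respect to the decomposition, so it suffices to treat the trace form $(X,Y)\mapsto\tr(XY)$ on a single $M_n(k)$. For this I would use the matrix units $E_{pq}$ as a basis and compute $E_{pq}E_{st}=\delta_{qs}E_{pt}$, whence $\tr(E_{pq}E_{st})=\delta_{qs}\delta_{pt}$; thus $\{E_{qp}\}$ is the dual basis to $\{E_{pq}\}$ under the form, which exhibits non-degeneracy directly. Equivalently, if $\tr(XY)=0$ for all $Y$, then taking $Y=E_{qp}$ forces each entry of $X$ to vanish. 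Note that this perfect pairing exists in any characteristic, in particular in the characteristic zero of $k$, so no subtlety enters here.

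Assembling these observations, $\lambda$ is a trace whose associated bilinear form $B$ is non-degenerate and associative, which is precisely the data of a symmetric Frobenius structure on $A$ in the sense of Definition~\ref{simple_Frob}. This completes the proof. I remark that any choice of nonzero scalars $c_i$ in $\lambda(a)=\sum_i c_i\tr(X_i)$ works equally well, so the Frobenius structure produced is by no means unique.
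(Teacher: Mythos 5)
Your proof is correct and takes essentially the same route as the paper: the paper's proof just invokes the Wedderburn--Artin decomposition and cites exercises in Lam~\cite{L1999} and Kock~\cite{K2003} for the standard matrix-trace construction, which is precisely what you have written out in detail (block decomposition, $\lambda=\sum_i \tr$, traciality, and non-degeneracy via matrix units). The only point worth flagging is that your starting identification $A\cong M_{n_1}(k)\times\dots\times M_{n_r}(k)$ is legitimate here because it is the paper's own definition of semi-simplicity in Definition~\ref{simple_Frob}; over a field that is not algebraically closed, Wedderburn--Artin would instead produce matrix algebras over division algebras, and your matrix trace would have to be replaced by (reduced) traces on those factors.
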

\begin{proof}
The Wedderburn-Artin Theorem is an essential part of the proof; for the remaining details 
cf.~[\cite{L1999} Exercise 12. p. 114] or [\cite{K2003}, Exercise 9. p. 106].
\end{proof}
\subsection{Bicategories}
For this subsection we use as references~\cite{HSchV2016,L1999,Lu2009,SP2014}. \begin{df}
The symmetric monoidal bicategory $\mathbf{Alg}_k^2$ over $k$ is given by the following data: 
\begin{description}
\item[Objects:] $\mathbf{Alg}_k$, i.e. associative and unital $k$-algebras,
\item[$1$-morphisms:] $(A,B)$-bimodules, $A,B\in\mathbf{Alg}_k$, and composition is given by the tensor producut of bimodules, 
\item[$2$-morphisms:] bimodule morphisms.
\end{description}
\end{df}

In [\cite{Lu2009} 2.3.] J. Lurie discusses the notion of fully dualisable objects in a symmetric monoidal $(\infty,n)$-category. For $A\in\mathbf{Alg}_k^2$, $A$ is called {\bf fully dualisable} if it is separable.
\begin{prop}[\cite{Lu2009,SP2014}]
Let $k$ be algebraically closed. Then the fully dualisable objects ${\mathbf{fdAlg}}_k^2$  in $\mathbf{Alg}^2_k$ correspond to the finite-dimensional semi-simple $k$-algebras.
\end{prop}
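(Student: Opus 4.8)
The plan is to use the characterisation recalled immediately above the statement: by the results of Lurie and Schommer-Pries cited there, an object $A\in\mathbf{Alg}_k^2$ is fully dualisable precisely when it is \emph{separable} over $k$, i.e. when $A$ is projective as a module over its enveloping algebra $A^e:=A\otimes_k A^{op}$, equivalently when the multiplication map $\mu:A^e\to A$ admits an $A^e$-linear section, equivalently when there exists a \emph{separability idempotent} $p=\sum_{i=1}^n a_i\otimes b_i\in A^e$ with $\mu(p)=\sum_i a_ib_i=1$ and $(a\otimes 1)p=(1\otimes a)p$ for all $a\in A$. Granting this, the Proposition reduces to the purely algebraic claim that, for $k$ algebraically closed of characteristic zero, the separable $k$-algebras are exactly the finite-dimensional semi-simple ones in the sense of Definition~\ref{simple_Frob}. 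I would therefore establish the two implications separately.

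First I would show that separability forces finite-dimensionality and semi-simplicity. Finite-dimensionality of a separable algebra over a field is classical: it follows from the separability idempotent via a dual-basis argument exhibiting $A$ as a finitely generated projective $k$-module, and I would quote this. For semi-simplicity I would run a Maschke-type averaging argument: given a surjection $\pi:M\twoheadrightarrow N$ of left $A$-modules admitting a merely $k$-linear section $s$, the map $\tilde s(n):=\sum_i a_i\,s(b_i n)$ is an $A$-linear section, since $\pi\tilde s(n)=\sum_i a_ib_i\,n=n$ and applying the $k$-linear map $u\otimes v\mapsto u\,s(vn)$ to the centrality relation $\sum_i xa_i\otimes b_i=\sum_i a_i\otimes b_i x$ yields $\tilde s(xn)=x\tilde s(n)$. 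Hence every short exact sequence of $A$-modules splits, so $A$ is semi-simple; combined with finiteness this places $A$ in the class of Definition~\ref{simple_Frob}.2, and over $k=\bar k$ the Wedderburn--Artin decomposition used in Proposition~\ref{Semisimple-Frob} gives $A\cong M_{n_1}(k)\times\cdots\times M_{n_r}(k)$.

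For the converse I would start from a finite-dimensional semi-simple $A$ and, again by Wedderburn--Artin, write $A\cong\prod_{i=1}^r M_{n_i}(k)$, all finite-dimensional division algebras over $k=\bar k$ being $k$ itself. It then suffices to check that each matrix factor is separable and that a finite product of separable algebras is separable. For $M_n(k)$ I would exhibit the explicit separability idempotent $p=\tfrac1n\sum_{i,j} e_{ij}\otimes e_{ji}$, well defined since $\operatorname{char}k=0$, and verify $\mu(p)=\tfrac1n\sum_{i,j}e_{ii}=1$ together with the required centrality $(e_{kl}\otimes1)p=(1\otimes e_{kl})p$; stability of separability under finite products is immediate from the componentwise idempotents. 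This yields separability of $A$, completing the equivalence and hence the identification of $\mathbf{fdAlg}_k^2$ with the finite-dimensional semi-simple $k$-algebras.

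The main obstacle I anticipate is the forward direction, specifically the finite-dimensionality of a separable algebra: unlike semi-simplicity it does not drop out of the idempotent by a one-line averaging, but genuinely requires the dual-basis/projectivity argument (or, alternatively, passing through the centre and Azumaya theory). I expect to dispatch it by citation rather than reproducing the computation. I would also remark that the asserted correspondence is to be read at the level of objects, the Morita $1$- and $2$-morphisms being irrelevant to the enumeration of the fully dualisable objects.
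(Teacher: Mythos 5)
Your argument is correct, but it is worth noting first that the paper itself offers no proof of this Proposition: it is stated purely as a quotation of results of Lurie \cite{Lu2009} and Schommer-Pries \cite{SP2014}, with the identification ``fully dualisable $=$ separable'' adopted as a definition in the sentence immediately preceding it. Your proposal accepts exactly the same categorical input (full dualisability in $\mathbf{Alg}_k^2$ amounts to separability) and then actually proves the algebraic half that the paper delegates to the references, namely that over $k=\bar k$ of characteristic zero the separable algebras are precisely the finite-dimensional semi-simple ones in the sense of Definition~\ref{simple_Frob}. Your two implications are sound: the Maschke-type averaging $\tilde s(n)=\sum_i a_i\,s(b_i n)$ is the standard splitting argument, and your verification of $A$-linearity via the centrality relation $\sum_i xa_i\otimes b_i=\sum_i a_i\otimes b_i x$ is correct; in the converse direction, Wedderburn--Artin over an algebraically closed field together with the explicit idempotent $p=\frac{1}{n}\sum_{i,j}e_{ij}\otimes e_{ji}$ (legitimate in characteristic zero; in positive characteristic one would use $\sum_i e_{i1}\otimes e_{1i}$ instead, avoiding the division by $n$) and componentwise idempotents for products settle separability. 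The one point you rightly flag and close by citation is finite-dimensionality of a separable $k$-algebra (Villamayor--Zelinsky); that is unavoidable if the statement is to cover all objects of $\mathbf{Alg}_k^2$ rather than only the finite-dimensional ones, so dispatching it by reference is the right call. In summary: the paper buys the whole statement from the literature, while you decouple the categorical content (taken on faith, exactly as the paper does) from the algebraic equivalence and make the latter essentially self-contained up to one classical citation --- a longer but more informative treatment, and entirely compatible with the paper's definitions.
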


The notion of {\bf Morita contexts} is discussed in~[\cite{L1999} {\S}18C], [\cite{SP2014} 3.8.4] or [\cite{HSchV2016} Section 2.].

\begin{df}
The bicategory $\mathbf{sFrob}_k$ of {\bf semi-simple symmetric Frobenius algebras} is given by the following data:
\begin{description}
\item[Objects:] semi-simple, symmetric Frobenius algebras,
\item[$1$-morphisms:] compatible Morita contexts,  
\item[$2$-morphisms:] isomorphisms of Morita contexts.
\end{description}
\end{df}

\subsection{Automaticity}
Here we show that the algebras we considered arise as syntactic algebras of recognisable power series, i.e. there exists a weighted finite-state automaton which recognises any such formal series. 
\begin{prop}
\label{rationalsyntactic_algebras}
Let $A\in\mathbf{Alg}_k$ and $\dim_k(A)=:N<\infty$. Then, if $A$ is 
\begin{enumerate}
\item  {\bf simple}, or 
\item {\bf semi-simple}, or
\item {\bf Frobenius}  
\end{enumerate}
then $A$ is {\bf rational syntactic}, i.e. there exists a recognisable series $S\in k\langle x_1,\dots,x_N\rangle^*$, such that $A\cong k\langle x_1,\dots,x_N\rangle/I_S$.
\end{prop}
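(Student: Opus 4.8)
The plan is to reduce all three cases to a single statement — that every finite-dimensional Frobenius algebra is syntactic — and then to promote this to \emph{rational} syntacticity by a dimension count. Throughout, note that since $\dim_k(A)=N<\infty$ we may pick a $k$-basis $a_1,\dots,a_N$ of $A$; such a basis is automatically an algebra generating set, so $A=k[a_1,\dots,a_N]$ is finitely generated by exactly $N$ elements, which is what will fix the alphabet size in the conclusion.

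First I would dispose of cases (1) and (2) by funnelling them into case (3). A finite-dimensional simple algebra has vanishing Jacobson radical: the radical is a nilpotent two-sided ideal, hence either $\{0\}$ or $A$, and it cannot be $A$ since $A$ is unital and nonzero; so it is $\{0\}$, which by the second characterisation in Definition~\ref{simple_Frob}(2) makes $A$ semi-simple. Thus (1)$\Rightarrow$(2). For the semi-simple case, Proposition~\ref{Semisimple-Frob} endows $A$ with the structure of a symmetric Frobenius algebra, which is in particular Frobenius; thus (2)$\Rightarrow$(3). Hence it suffices to treat the Frobenius case.

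The heart of the argument is then Frobenius $\Rightarrow$ syntactic. By the hyperplane characterisation in Definition~\ref{simple_Frob}(3) there is a hyperplane $H\subset A$ containing no nonzero right (equivalently left) ideal; by the Remark following that definition, $H$ then contains no nonzero two-sided ideal either. Since $A$ is finitely generated and nonzero, Theorem~\ref{Syntactic_hyperplane}(1) applies and shows that $A$ is syntactic. Invoking Lemma~\ref{finite_alphabet} with the generating set $a_1,\dots,a_N$ now produces a formal series $S\in k\langle x_1,\dots,x_N\rangle^*$ with $A\cong k\langle x_1,\dots,x_N\rangle/I_S=\mathfrak{A}_S$ over the finite alphabet $X=\{x_1,\dots,x_N\}$.

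Finally I would read off rationality from the dimension. Since $\mathfrak{A}_S\cong A$ we have $\dim_k(\mathfrak{A}_S)=N<\infty$, so Theorem~\ref{Syntactic_hyperplane}(2) yields that $S$ is rational, i.e. $A$ is rational syntactic; the passage from \emph{rational} to \emph{recognisable} (a weighted finite automaton computing $S$) is the Kleene--Sch\"utzenberger theorem for series over a field. The main obstacle is not computational but one of bookkeeping: one must check that the hyperplane supplied by the Frobenius structure meets verbatim the hypothesis of Reutenauer's criterion — this is exactly the role of the Remark, which upgrades ``no one-sided ideal'' to ``no two-sided ideal'' — and that the generating set can be chosen of cardinality exactly $N$ so that the alphabet furnished by Lemma~\ref{finite_alphabet} has the size asserted in the statement.
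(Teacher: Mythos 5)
Your proposal is correct and follows essentially the same route as the paper's proof: both reduce cases (1) and (2) to the Frobenius case via Proposition~\ref{Semisimple-Frob}, take a $k$-basis as a finite generating set of size $N$, and obtain the series $S$ from the Frobenius hyperplane/functional through the mechanism of Lemma~\ref{finite_alphabet}, reading off rationality from $\dim_k(\mathfrak{A}_S)=N<\infty$ via Theorem~\ref{Syntactic_hyperplane}(2). The only cosmetic difference is that the paper inlines the construction (setting $S=\lambda\circ j_a$ with $\lambda$ the Frobenius form and verifying $\ker(j_a)=I_S$ directly), whereas you invoke Theorem~\ref{Syntactic_hyperplane}(1) and Lemma~\ref{finite_alphabet} as black boxes.
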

\begin{rem}
In~[\cite{R1980} Examples 1., p. 452] Ch. Reutenauer lists simple and (symmetric) Frobenius algebras as examples for syntactic algebras. However, the stronger statements in Proposition~\ref{rationalsyntactic_algebras} seem to be absent from the literature. 
\end{rem}
\begin{proof}
Let us first show 3. By assumption $A=ka_1\oplus\dots\oplus ka_N$, and hence $\{a_1,\dots, a_N\}$ is a finite generating set for $A$. Therefore, the claim follows from Lemma~\ref{finite_alphabet}. In more detail:
define the surjective algebra morphism $j_a:k\langle x_1,\dots,x_N\rangle\rightarrow A$ by $x_i\mapsto a_i$, for $i=1,\dots,N$ and the linear functional $S:=\lambda\circ j_a$, where $\lambda:A\rightarrow k$ is the Frobenius form as in Definition~\ref{simple_Frob}~3., cf. the diagram below:
\[
\begin{xy}
  \xymatrix{
      {k\langle x_1,\dots, x_N\rangle}\ar@/^2pc/[rr]^{{S}}\ar[d]^{\pi}\ar@{->>}[r]^{j_a}& \bigoplus_{i=1}^N ka_i\ar[r]^{\quad\lambda}&k\\
      \frac{k\langle x_1,\dots, x_N\rangle}{\ker(j_a)}\ar[ru]_{\cong}&&
                            }
\end{xy}
\]
Then as in the proof of Lemma~\ref{finite_alphabet}, we have $\ker(j_a)=I_S$ and hence $\mathfrak{A}_S\cong A$ which shows finite dimensionality. Therefore, by Theorem~\ref{Syntactic_hyperplane}, the formal series $S$ is rational.

In order to show 1. and 2. we remark that simple implies semi-simple. Then  Proposition~\ref{Semisimple-Frob} shows that any such algebra can be endowed with the structure of a symmetric Frobenius algebra from which the claim follows from 3.
\end{proof}
The following is an opposite statement for commutative syntactic algebras.
\begin{prop}
Let $X$ be a finite set and $S\in k\langle\langle X\rangle\rangle$. If $\dim_k(\mathfrak{A}_S)<\infty$ and $\mathfrak{A}_S\in\mathbf{cAlg}_k$ then $\mathfrak{A}_S\in\mathbf{cFrob}_k$, i.e. it is a commutative Frobenius algebra. \end{prop}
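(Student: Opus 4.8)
The plan is to manufacture a Frobenius functional directly from $S$ and to verify the hyperplane characterisation of Definition~\ref{simple_Frob}~3., with commutativity supplying precisely the extra leverage needed to pass from two-sided to one-sided ideals. I would first reduce to the case $\mathfrak{A}_S\neq0$ (equivalently $S\neq0$), the case $\mathfrak{A}_S=0$ being vacuous. Since $I_S\subset\ker(S)$ by construction, $S$ factors through the canonical epimorphism $\pi:k\langle X\rangle\rightarrow\mathfrak{A}_S$ as $S=\bar S\circ\pi$ for a unique linear functional $\bar S:\mathfrak{A}_S\rightarrow k$; as $S\neq0$ this $\bar S$ is nonzero, so $H:=\ker(\bar S)$ is a hyperplane in $\mathfrak{A}_S$. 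The target is then to show that $H$ contains no nonzero one-sided ideal.

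The central step is to establish that $H=\ker(\bar S)$ contains no nonzero \emph{two-sided} ideal of $\mathfrak{A}_S$, and here I would lean on the maximality built into $I_S$. Given a two-sided ideal $J\subset\mathfrak{A}_S$ with $J\subset H$, its preimage $\pi^{-1}(J)$ is a two-sided ideal of $k\langle X\rangle$ containing $\ker(\pi)=I_S$ and contained in $\ker(S)$, the latter because $x\in\pi^{-1}(J)$ forces $S(x)=\bar S(\pi(x))=0$. Maximality of $I_S$ among two-sided ideals inside $\ker(S)$ then gives $\pi^{-1}(J)=I_S$, whence $J=0$.

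The final step is where the commutativity hypothesis does its work. Because $\mathfrak{A}_S\in\mathbf{cAlg}_k$, every left ideal and every right ideal is automatically two-sided, so the previous paragraph upgrades at once to the statement that $H$ contains no nonzero left ideal and no nonzero right ideal. By the hyperplane characterisation in Definition~\ref{simple_Frob}~3., $\mathfrak{A}_S$ is Frobenius with functional $\bar S$ and non-degenerate associative pairing $B(a,b):=\bar S(ab)$; commutativity further yields $\bar S(ab)=\bar S(ba)$, so $\bar S$ is a trace and $\mathfrak{A}_S\in\mathbf{cFrob}_k$ as claimed. I expect the only genuine obstacle to be this very passage from two-sided to one-sided ideals: the syntactic construction guarantees maximality only among two-sided ideals, and without commutativity there is no reason for $\ker(\bar S)$ to avoid nonzero one-sided ideals, so the hypothesis $\mathfrak{A}_S\in\mathbf{cAlg}_k$ is exactly what closes the argument.
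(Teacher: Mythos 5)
Your proposal is correct and takes essentially the same route as the paper: exhibit a hyperplane in $\mathfrak{A}_S$ containing no nonzero two-sided ideal, observe that commutativity makes every one-sided ideal two-sided, and conclude by the hyperplane characterisation of Frobenius algebras in Definition~\ref{simple_Frob}, 3. The only divergence is that the paper obtains the hyperplane by citing Theorem~\ref{Syntactic_hyperplane}, 1., whereas you reconstruct it explicitly as $\ker(\bar S)$ via the maximality of $I_S$ and the preimage argument; this is exactly the relevant direction of Reutenauer's theorem, so your argument is a self-contained unfolding of the paper's citation rather than a genuinely different proof.
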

\begin{proof}
By assumption, $\mathfrak{A}_S$ is rational-syntactic and by Proposition~\ref{Syntactic_hyperplane}, there exists a hyperplane $H\subset\mathfrak{A}_S$ which contains no nontrivial two-sided ideal. As $\mathfrak{A}_S$ is commutative every ideal, left or right, is two-sided. Therefore by Definition~\ref{simple_Frob}, 3., the claim follows.
\end{proof}
Types of languages form pseudo-varieties, and by extension of the {\bf Eilenberg Variety Theorem}, they correspond to pseudo-varieties of finite algebras~[\cite{R1980} Théorème III.1.1.]

It is shown in~[\cite{R1980} 2. Exemples de variétés c. p. 472] that finite-dimensional commutative syntactic algebras correspond to exchangeable rational series, cf. Definition~\ref{exser}. Therefore we have
\begin{cor}
\label{ex_cFrob}
There is a (bijective) correspondence between 
$\mathbf{cFrob}_k$ and the $s$-variety of rational exchangeable series.
\end{cor}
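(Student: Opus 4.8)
The plan is to read this as a single matched pair in the Eilenberg--Reutenauer variety correspondence and to assemble it from three ingredients already in hand: Proposition~\ref{rationalsyntactic_algebras}(3) (finite-dimensional Frobenius algebras are rational syntactic), the preceding proposition (finite-dimensional commutative syntactic algebras are commutative Frobenius), and Reutenauer's identification of the commutative part of the correspondence with exchangeable series~[\cite{R1980} p.~472]. First I would recall from the cited extension of the Eilenberg Variety Theorem that $s$-varieties of rational series stand in bijection with pseudo-varieties of finite-dimensional syntactic algebras, the bijection being realised by $S\mapsto\mathfrak{A}_S$ together with the reverse passage that represents a finite-dimensional algebra $A$ as $\mathfrak{A}_S$ via a finite generating set (Lemma~\ref{finite_alphabet}). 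Under this lattice isomorphism, Reutenauer's ``Exemples de vari\'et\'es c.'' matches the $s$-variety of exchangeable rational series with the pseudo-variety of finite-dimensional \emph{commutative} syntactic algebras; this matched pair is the backbone along which I would transport the result.

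The remaining substantive task is then purely one of identifying that pseudo-variety with $\mathbf{cFrob}_k$, i.e. of proving the equality of classes ``finite-dimensional commutative syntactic'' $=$ ``finite-dimensional commutative Frobenius.'' One inclusion is exactly the preceding proposition: if $\dim_k(\mathfrak{A}_S)<\infty$ and $\mathfrak{A}_S$ is commutative, then $\mathfrak{A}_S\in\mathbf{cFrob}_k$. For the reverse inclusion I would take $A\in\mathbf{cFrob}_k$ and apply Proposition~\ref{rationalsyntactic_algebras}(3): being finite-dimensional Frobenius, $A$ is rational syntactic, so $A\cong\mathfrak{A}_S$ for a rational series $S$; since $A$ is commutative, the cited result lets me choose $S$ so that its commutative image $\alpha(S)$ loses no information, that is, so that $S$ is exchangeable in the sense of Definition~\ref{exser}. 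Thus every finite-dimensional commutative Frobenius algebra is the syntactic algebra of a rational exchangeable series, which closes the equality of the two classes.

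With the class equality established, I would assemble the asserted bijection. To a rational exchangeable series $S$ the preceding proposition assigns a well-defined algebra $\mathfrak{A}_S\in\mathbf{cFrob}_k$, and this descends to the $s$-variety because $\mathfrak{A}_S$ depends only on the syntactic (maximal-quotient) data of $S$. Conversely, to $A\in\mathbf{cFrob}_k$ the construction above attaches a representing exchangeable series, and the First Isomorphism Theorem argument of Lemma~\ref{finite_alphabet} recovers $A\cong\mathfrak{A}_S$ up to isomorphism. Because the syntactic algebra is, by its very definition through the maximal two-sided ideal $I_S\subset\ker(S)$, the minimal algebra recognising $S$, the composite $S\mapsto\mathfrak{A}_S\mapsto S$ returns the same exchangeable class, so the two assignments are mutually inverse.

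I expect the main obstacle to be bookkeeping rather than depth. The genuine care lies in pinning down precisely which closure operations define the $s$-variety on the series side and the pseudo-variety on the algebra side, so that the Eilenberg-type bijection applies verbatim, and in confirming that ``commutative Frobenius'' exhausts the finite-dimensional commutative syntactic algebras rather than naming only a subclass. The two propositions already supply both inclusions, so the residual content is organisational: checking that passage to the commutative image $\alpha$ is compatible with the variety correspondence and that exchangeability is exactly the series-side shadow of commutativity of $\mathfrak{A}_S$.
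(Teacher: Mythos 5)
Your proposal is correct and follows essentially the same route as the paper: it invokes the Eilenberg--Reutenauer variety correspondence together with Reutenauer's identification of exchangeable rational series with finite-dimensional commutative syntactic algebras, and then closes the loop by matching that class with $\mathbf{cFrob}_k$ via the two propositions (commutative syntactic $\Rightarrow$ commutative Frobenius, and Proposition~\ref{rationalsyntactic_algebras}(3) with Lemma~\ref{finite_alphabet} for the converse). The paper leaves the converse inclusion implicit, so your explicit treatment of both inclusions is, if anything, a more complete write-up of the same argument.
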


\begin{rem}
The above statement implies that the tangent space to a Frobenius manifold can be considered as having a weighted finite-state automaton located at every point of the manifold whose associated syntactic algebra corresponds to the Frobenius algebra at that point.
\end{rem}

\section{Fully extended two-dimensional TFTs}
Previously with T. Kato~\cite{FK2011} we outlined how several (classes) of phenomena related to enumerative problems in geometry or integrable systems are principally governed by cellular automata. Here we relate weighted finite-state automata to open-closed string theories.

Throughout this section we assume $k$ to be algebraically closed and of characteristic zero. 

(Oriented) open-closed cobordisms, have been studied e.g. by  A. Lauda and H. Pfeiffer~\cite{LPf2006} with the help of {\bf knowledgable Frobenius algebra} which they introduced, and J. Morton~\cite{M2007} considered weak $2$-functors from $\mathbf{nCob}_2$ to $\mathbf{Vect}_2$. J. Lurie's fundamental work~\cite{Lu2009} aims at classifying all TFTs which he achieves by developing new mathematical tools and the corresponding language in order to reformulate the {\bf cobordism hypothesis} within this framework and to outline its proof.  

The main theorem for the {\bf oriented two-dimensional cobordism hypothesis} was given by Ch. Schommer-Pries~\cite{SP2014}, which we recall in a combined form with the succinct formulation~[\cite{HSchV2016} Theorem 2.10].

\begin{thm}[\cite{SP2014} Theorem 3.52 ]
Let $k$ be an algebraically closed field with $\operatorname{char}(k)=0$.
 There exists a weak $2$-functor:
\begin{eqnarray*}
\operatorname{Fun}_{\otimes}(\operatorname{Cob}^{\operatorname{or}}_{2,1,0},\mathbf{Alg}^2_k)&\rightarrow&\mathbf{sFrob}_k\\
Z&\mapsto&Z(\bullet_+),
\end{eqnarray*}
i.e. there exists an equivalence of the bicategory of two-dimensional oriented fully extended TFTs with values in $\mathbf{Alg}_k^2$ and the bicategory of semi-simple Frobenius algebras $\mathbf{sFrob}_k$.
\end{thm}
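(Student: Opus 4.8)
The plan is to deduce the statement from J.~Lurie's cobordism hypothesis specialised to dimension $n=2$, treating the oriented case as the $SO(2)$-homotopy fixed points of the framed case. First I would invoke the framed cobordism hypothesis: fully extended framed $2$-dimensional TFTs valued in the symmetric monoidal bicategory $\mathbf{Alg}_k^2$, regarded as an $(\infty,2)$-category, are classified by the core of the full sub-bicategory $\mathbf{fdAlg}_k^2$ on fully dualisable objects, the classifying equivalence being evaluation on the framed point $\bullet_+$. Using that $k$ is algebraically closed, the Proposition of \cite{Lu2009,SP2014} recalled above identifies these fully dualisable objects with the finite-dimensional semi-simple $k$-algebras, together with their Morita contexts and context isomorphisms.

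Next I would pass from the framed to the oriented theory. The oriented bordism bicategory $\operatorname{Cob}^{\operatorname{or}}_{2,1,0}$ arises from the framed one via the $SO(2)$-action induced by rotation of framings, so that $\operatorname{Fun}_{\otimes}(\operatorname{Cob}^{\operatorname{or}}_{2,1,0},\mathbf{Alg}_k^2)$ is computed as the bicategory of $SO(2)$-homotopy fixed points of the induced action on $\mathbf{fdAlg}_k^2$. Since $SO(2)\simeq S^1$ with $\pi_1(S^1)\cong\Z$, the generator of $\pi_1(SO(2))$ acts on a fully dualisable algebra $A$ by the \emph{Serre automorphism} $S_A$, which in $\mathbf{Alg}_k^2$ is the invertible $A$-bimodule $A^\vee=\Hom_k(A,k)$ carrying the Nakayama twist. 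A homotopy fixed point structure on $A$ then supplies a trivialisation $S_A\simeq\id_A$, i.e. a bimodule isomorphism $A\cong A^\vee$, equivalently a non-degenerate associative bilinear form $B:A\times A\rightarrow k$. By Definition~\ref{simple_Frob}~3. this is precisely a Frobenius structure, and the coherence datum one level up forces $B$ to be symmetric, i.e. $\lambda$ to be a trace. Thus the fixed-point data on a fully dualisable $A$ is exactly a semi-simple symmetric Frobenius structure.

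Finally I would match the higher morphisms: an $SO(2)$-equivariant $1$-morphism of homotopy fixed points is a compatible Morita context and an equivariant $2$-morphism is an isomorphism of such, so the homotopy fixed point bicategory is $\mathbf{sFrob}_k$ and the induced classifying functor is $Z\mapsto Z(\bullet_+)$ as stated; the succinct packaging of this identification is \cite{HSchV2016} Theorem~2.10.

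The main obstacle is the computation in the second step: identifying the Serre automorphism of $\mathbf{Alg}_k^2$ with the Nakayama/Frobenius datum and, crucially, showing that the \emph{homotopy}—rather than strict—fixed point structure supplies exactly the symmetry of the form and imposes no further constraint. An alternative route, the one actually carried out by Schommer-Pries, bypasses the homotopy-fixed-point calculus by giving an explicit generators-and-relations presentation of the symmetric monoidal bicategory $\operatorname{Cob}^{\operatorname{or}}_{2,1,0}$; a $2$-functor $Z$ is then determined by the images of the generating bordisms subject to the defining relations, and one verifies directly that this data amounts to a semi-simple symmetric Frobenius algebra. In either approach the verification of the coherences that pin down symmetry is the technical heart of the argument.
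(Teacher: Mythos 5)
The paper does not actually prove this theorem: it is recalled verbatim from the literature, with \cite{SP2014} (Theorem 3.52) as the source of the classification and \cite{HSchV2016} (Theorem 2.10) supplying the formulation in terms of $\mathbf{sFrob}_k$. Your proposal can therefore only be measured against those cited proofs, and on that score you have correctly identified the two available routes: the generators-and-relations presentation of $\operatorname{Cob}^{\operatorname{or}}_{2,1,0}$ obtained from Cerf theory, which is what \cite{SP2014} actually carries out, and the cobordism-hypothesis route (framed classification plus $SO(2)$-homotopy fixed points) sketched in \cite{Lu2009}, which is the one you develop. Be aware, though, that your route is conditional on inputs that neither cited source proves in the bicategorical setting: the framed cobordism hypothesis for symmetric monoidal bicategories was only established later (Pstragowski), and the identification of the $SO(2)$-homotopy fixed points of the core of $\mathbf{fdAlg}_k^2$ with $\mathbf{sFrob}_k$ is precisely the theorem of Hesse--Schweigert--Valentino cited here as \cite{HSchV2016}---so in your argument that result is doing the work, not receiving a proof.

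There is also a concrete error at what you yourself call the technical heart of the argument. You assert that a trivialisation $S_A\simeq\operatorname{id}_A$ gives a bimodule isomorphism $A\cong A^\vee$, ``equivalently a non-degenerate associative bilinear form,'' and that symmetry is then forced ``by the coherence datum one level up.'' The logic is backwards: it is classical (cf.~\cite{L1999}, \S 16) that an isomorphism $A\cong A^\vee$ of \emph{left} $A$-modules is the same as a Frobenius form, whereas an isomorphism of $(A,A)$-\emph{bimodules} is the same as a \emph{symmetric} Frobenius form---the trace property $\lambda(ab)=\lambda(ba)$ is already encoded in two-sided $A$-linearity of the map $a\mapsto\lambda(\,\cdot\,a)$, not in any higher coherence. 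The genuine subtlety lies elsewhere, and your sketch passes over it: a homotopy fixed point consists a priori of more data than a $2$-isomorphism $S_A\Rightarrow\operatorname{id}_A$ (there is in addition an invertible modification subject to coherence conditions), and showing that for finite-dimensional semisimple algebras over an algebraically closed field this entire package reduces exactly to a semisimple symmetric Frobenius structure---no more and no less, with compatible Morita contexts and their isomorphisms as the $1$- and $2$-morphisms---is the actual content of \cite{HSchV2016}. As written, your second and third paragraphs assert this reduction rather than prove it; by contrast, the proof in \cite{SP2014} bypasses the fixed-point calculus entirely, which is why the paper can cite it unconditionally.
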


In String Theory, independently M. Fukuno, S. Hosono and H. Kawai~\cite{FHK1994} and C. Bachas and M. Petropolous~\cite{BP1993} investigated {\bf lattice TFTs} (LTFT) and constructed {\bf state sums} based on triangulations of ordinary two-dimensional cobordisms. Their results can be rephrased as follows. 
\begin{thm}[\cite{BP1993,FHK1994}]
Let $k$ be an algebraically closed field with $\operatorname{char}(k)=0$.
The class of LTFTs is equivalent to $\mathbf{fdAlg}_k^2$, i.e. the fully dualisable objects in $\mathbf{Alg}_k^2$. For  $A\in\mathbf{fdAlg}_k^2$, the centre $\mathfrak{z}(A)$, corresponds to the closed string states and $\dim_k(\mathfrak{z}(A))$ corresponds to the number of independent physical operators. 
\end{thm}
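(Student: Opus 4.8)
The plan is to build the correspondence from the state-sum side and then match it against the classification of fully dualisable objects already recalled in the excerpt. First I would fix the combinatorial data of a two-dimensional lattice model: a finite-dimensional $k$-vector space $A$ with basis $\{a_i\}$, a three-tensor $C_{ijk}$ (the ``vertex weight'') built from a multiplication and a bilinear form, and the inverse form $g^{ij}$ serving as a ``propagator'' to glue edges of a triangulation. Given an oriented, triangulated compact surface, the state sum $Z$ is the total contraction of one $C$-tensor per triangle against one $g^{ij}$ per internal edge. The entire content of the Fukuno--Hosono--Kawai and Bachas--Petropoulos constructions is the requirement that $Z$ be independent of the chosen triangulation.

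For triangulation-independence I would invoke Pachner's theorem: any two triangulations of a compact surface are connected by finitely many $2$--$2$ (diagonal flip) and $1$--$3$ (barycentric subdivision) moves. The $2$--$2$ move equates the two triangulations of a quadrilateral and yields the identity $\sum_m C_{ij}{}^{m} C_{mk}{}^{p}=\sum_m C_{jk}{}^{m} C_{im}{}^{p}$, i.e. associativity of the product $a_i a_j=\sum_k C_{ij}{}^{k} a_k$ obtained after raising an index with $g^{ij}$. The $1$--$3$ move equates one triangle with its barycentric subdivision and forces the gluing form to coincide with the trace form $\langle a,b\rangle=\tr(L_{ab})$ of left multiplication, concretely $g_{ij}=\sum_{k,l}C_{ikl}C_{jlk}$, together with its invertibility. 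Thus triangulation-invariant lattice data are precisely finite-dimensional associative algebras whose trace form is non-degenerate. In characteristic zero non-degeneracy of the trace form is equivalent to the vanishing of the Jacobson radical, hence to semi-simplicity, which, since $k$ is algebraically closed, is the same as separability.

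At this point I would close the first assertion by appeal to the Proposition of Lurie and Schommer-Pries recalled above, which identifies $\mathbf{fdAlg}_k^2$ with the finite-dimensional semi-simple $k$-algebras: the assignment sending a lattice TFT to its underlying algebra $A$, and conversely a separable algebra to its state sum, is mutually inverse on isomorphism classes and promotes to an equivalence once one checks that basis-change symmetries of the state sum correspond to algebra isomorphisms. This gives the claimed equivalence of the class of LTFTs with $\mathbf{fdAlg}_k^2$.

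For the closed-string claim I would evaluate the state sum on surfaces with boundary. A boundary circle carries a state space, and gluing a cylinder defines an idempotent $P$ on $A$; explicitly $P(a)=\sum_{i,j} g^{ij}\,a_i\,a\,a_j$ is the ``handle/window'' operator, and a direct computation shows that $P$ projects $A$ onto its centre $\mathfrak{z}(A)$, which is therefore the closed-string Hilbert space. Using the Wedderburn decomposition $A\cong M_{n_1}(k)\times\dots\times M_{n_r}(k)$ furnished by semi-simplicity, $\mathfrak{z}(A)\cong k^r$ is spanned by the $r$ central idempotents, so $\dim_k\mathfrak{z}(A)=r$ counts the independent physical closed-string operators. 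The main obstacle I anticipate is the $1$--$3$ move analysis: one must show that triangulation-independence genuinely pins the gluing form to the trace form and forces its non-degeneracy, rather than assuming it, since this is the single point where semi-simplicity, and not merely associativity, enters.
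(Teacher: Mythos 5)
The paper gives no proof of this theorem at all: it is stated as a rephrasing of the cited results of Bachas--Petropoulos and Fukuma--Hosono--Kawai, with the identification of $\mathbf{fdAlg}_k^2$ with the finite-dimensional semi-simple $k$-algebras quoted from the Lurie/Schommer-Pries proposition earlier in the text. Your reconstruction --- state sum over a triangulation, associativity from the $2$--$2$ move, the trace form and its non-degeneracy from the $1$--$3$ move, semi-simplicity via the characteristic-zero trace criterion (hence separability over the algebraically closed $k$), and the cylinder idempotent $P(a)=\sum_{i,j}g^{ij}a_i a a_j$ projecting onto $\mathfrak{z}(A)$ --- is precisely the argument of those cited references combined with the quoted classification, so your proposal is correct and follows essentially the intended route.
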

Before we proceed further, let us summarise the chain of equivalences, which results from \cite{HSchV2016,Lu2009,SP2014}: 
\begin{equation*}
\mathbf{fdAlg}_k^2\leftrightarrow\mathbf{sFrob}_k\leftrightarrow\text{Calabi-Yau category}.
\end{equation*}

The following relation holds between rational languages and closed string theory. 
\begin{thm}
The category of oriented $2$-dimensional TQFTs is equivalent to the $s$-variety of exchangeable rational power series.
\end{thm}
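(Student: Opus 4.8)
The plan is to factor the asserted equivalence through the category $\mathbf{cFrob}_k$ of commutative Frobenius algebras, by composing the classical description of closed oriented $2$-dimensional TQFTs with the correspondence of Corollary~\ref{ex_cFrob}. Schematically, I would establish
\[
2\text{-}\mathbf{TQFT}^{\operatorname{or}}\ \xrightarrow{\ \simeq\ }\ \mathbf{cFrob}_k\ \longleftrightarrow\ \{\,\text{rational exchangeable series}\,\},
\]
where the first arrow is the classical (Dijkgraaf--Abrams--Kock) equivalence and the double arrow is Corollary~\ref{ex_cFrob}; the composite is then the equivalence with the $s$-variety to be proved.

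First I would recall the generators-and-relations presentation of the symmetric monoidal category $\operatorname{Cob}_2^{\operatorname{or}}$ of closed oriented $2$-cobordisms, cf.~\cite{K2003}: under disjoint union and gluing it is generated by the circle $S^1$ together with the cap, the cup, the pair of pants and its orientation-reverse. A symmetric monoidal functor $Z:\operatorname{Cob}_2^{\operatorname{or}}\rightarrow\mathbf{Vect}_k$ is thereby determined by the vector space $A:=Z(S^1)$, on which the four generating cobordisms induce a multiplication, a unit, a comultiplication and a counit. The defining relations of $\operatorname{Cob}_2^{\operatorname{or}}$ (associativity, commutativity, the (co)unit axioms and the Frobenius relation) translate verbatim into the axioms of a commutative Frobenius algebra, and nondegeneracy of the cylinder pairing forces $\dim_k(A)<\infty$; conversely each commutative Frobenius algebra reconstructs such a $Z$. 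This assignment is functorial in monoidal natural transformations, giving the equivalence $2\text{-}\mathbf{TQFT}^{\operatorname{or}}\simeq\mathbf{cFrob}_k$.

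It then remains to invoke Corollary~\ref{ex_cFrob} directly: a commutative Frobenius algebra is finite-dimensional and commutative, hence by Theorem~\ref{Syntactic_hyperplane} and Proposition~\ref{rationalsyntactic_algebras} it is realised as the syntactic algebra $\mathfrak{A}_S$ of a rational series $S$, and its commutativity identifies $S$, via Reutenauer's criterion, as an exchangeable series. Composing this correspondence with the equivalence of the previous paragraph yields the claimed equivalence between oriented $2$-dimensional TQFTs and the $s$-variety of rational exchangeable series.

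The step I expect to be the main obstacle is not the bijection on objects, which Corollary~\ref{ex_cFrob} supplies, but the promotion of that bijection to a functor compatible with morphisms across all three descriptions: monoidal natural transformations of TQFTs, homomorphisms of commutative Frobenius algebras, and the morphisms of the $s$-variety dictated by the Eilenberg--Reutenauer correspondence. Concretely, one must verify that the syntactic-algebra assignment $A\mapsto\mathfrak{A}_S$ is natural enough that a Frobenius-algebra map lifts to a morphism of the associated series and conversely; here the maximality of the syntactic ideal $I_S$ together with finite-dimensionality should do the essential work, after which the equivalence of categories follows formally from the composite of the two functors.
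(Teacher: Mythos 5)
Your proposal is correct and follows exactly the paper's own route: the paper's proof is precisely the composition of the classical $2D$ TQFT $\simeq \mathbf{cFrob}_k$ equivalence (citing~\cite{K2003}) with Corollary~\ref{ex_cFrob}. You simply spell out the generators-and-relations details and the functoriality concern that the paper leaves implicit.
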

\begin{proof}
The classic equivalence between $2D$ TQFTs and $\mathbf{cFrob}_k$, cf. e.g.~\cite{K2003}, combined with Corollary~\ref{ex_cFrob}, yields the statement.  
\end{proof}

The second relation between rational series and open-closed string theory is given next.
\begin{thm}
Let $X$ be a finite alphabet and $C$ a regular biprefix code. Then the following correspondences hold:
\[
\begin{xy}
  \xymatrix{
      \{\text{$C$ code: regular, biprefix}\}\ar[r]\ar[d]_{\mathfrak{z}} & \mathbf{fdAlg}_k^2 \ar[r] \ar[d]^{\mathfrak{z}} &\text{LTFT}\ar[d]^{\mathfrak{z}}\ar[l] \\
                            \{\text{$s$-variety: rational exchangeable series}\}\ar[r]   &\mathbf{cFrob}_k \ar[r]\ar[l]&\text{$2D$ TQFT}\ar[l]
               }
\end{xy}
\]
where $\mathfrak{z}$ is the {\bf centraliser}, and $\mathbf{cFrob}$ is given by the centres $\mathfrak{z}(A)$ of the finite-dimensional semi-simple algebras $A$.
\end{thm}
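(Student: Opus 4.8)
\emph{Proof proposal.} The plan is to treat the two rows as independent chains of equivalences assembled from the results already at hand, to realise the three vertical arrows as instances of the centre (centraliser) functor $\mathfrak{z}$, and then to check that the two squares commute; the heart of the matter is the compatibility of $\mathfrak{z}$ with the open--closed equivalence on the right.

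First I would fix the horizontal arrows. In the top row the left-hand map is $C\mapsto\mathfrak{A}_{C^*}$: by Theorem~\cite{R1982} the syntactic algebra of a regular biprefix code is finite-dimensional and semi-simple, and by Proposition~\cite{Lu2009,SP2014} the finite-dimensional semi-simple $k$-algebras are precisely the fully dualisable objects, so this map lands in $\mathbf{fdAlg}_k^2$; the right-hand equivalence between $\mathbf{fdAlg}_k^2$ and the LTFTs is Theorem~\cite{BP1993,FHK1994}. In the bottom row the left-hand equivalence is Corollary~\ref{ex_cFrob}, identifying the $s$-variety of rational exchangeable series with $\mathbf{cFrob}_k$, and the right-hand equivalence between $\mathbf{cFrob}_k$ and oriented $2D$~TQFTs is the classical one, cf.~\cite{K2003}. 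Both rows are thus in place, the only non-invertible arrow being $C\mapsto\mathfrak{A}_{C^*}$.

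Next I would define the vertical centre map on each column. For $A\in\mathbf{fdAlg}_k^2$, semi-simplicity gives $A\cong M_{n_1}(k)\times\cdots\times M_{n_r}(k)$, whence $\mathfrak{z}(A)\cong k^r$ is commutative and semi-simple; by Proposition~\ref{Semisimple-Frob} it carries a symmetric Frobenius structure, so $\mathfrak{z}\colon\mathbf{fdAlg}_k^2\to\mathbf{cFrob}_k$ is well defined. On the right column $\mathfrak{z}$ is supplied by Theorem~\cite{BP1993,FHK1994}, which identifies the closed-string sector of the LTFT attached to $A$ with $\mathfrak{z}(A)$; since that closed sector is itself an oriented $2D$~TQFT this yields the right-hand vertical map. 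The left vertical arrow I would define by transport along the bottom equivalence, sending $C$ to the rational exchangeable series that corresponds under Corollary~\ref{ex_cFrob} to $\mathfrak{z}(\mathfrak{A}_{C^*})$; this is legitimate because $\mathfrak{z}(\mathfrak{A}_{C^*})\cong k^r$ is commutative, finite-dimensional and semi-simple, hence rational-syntactic by Proposition~\ref{rationalsyntactic_algebras} and commutative Frobenius, so it lies in the domain of that equivalence.

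It then remains to check commutativity. The left square commutes by construction, its bottom-left entry being defined as the image of $\mathfrak{z}(\mathfrak{A}_{C^*})$. For the right square one must compare two a priori different routes from $A$ to an oriented $2D$~TQFT: passing first to the associated LTFT and then to its closed sector, versus forming $\mathfrak{z}(A)\in\mathbf{cFrob}_k$ and applying the classical equivalence. Both produce the closed theory whose underlying commutative Frobenius algebra is $\mathfrak{z}(A)$, so the square commutes. I expect this last comparison to be the main obstacle: one has to verify that the categorical passage to the closed-string sector agrees with the purely algebraic operation $A\mapsto\mathfrak{z}(A)$ not merely at the level of underlying vector spaces but as commutative Frobenius algebras, and that this identification is natural in $A$. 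Once this naturality of $\mathfrak{z}$ across the two rows is secured, the diagram commutes and the asserted correspondences follow by composition along the equivalences already constructed.
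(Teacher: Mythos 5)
The paper gives no explicit proof of this theorem --- it is stated as a summary corollary of the results already established --- and your proposal assembles exactly those intended ingredients: Reutenauer's theorem that a rational biprefix code has finite-dimensional semi-simple syntactic algebra, the Lurie/Schommer-Pries identification of fully dualisable objects, the Bachas--Petropoulos/Fukuma--Hosono--Kawai equivalence of $\mathbf{fdAlg}_k^2$ with LTFTs (including the identification of the closed-string sector with the centre), Corollary~\ref{ex_cFrob}, and the classical equivalence of $\mathbf{cFrob}_k$ with oriented $2D$ TQFTs. Your realisation of the vertical arrows via the centre functor, the definition of the left vertical map by transport along the bottom equivalence (which makes the left square commute by construction), and your honest flagging that the right square reduces to the FHK identification of the closed sector with $\mathfrak{z}(A)$ as a commutative Frobenius algebra are all correct and constitute precisely the argument the paper leaves implicit.
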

\subsection{Group algebras}
Group algebras of finite groups have been of particular interest in string theory, cf. e.g.~\cite{BP1993,FHK1994, Kim2017,LPf2006,M2007}.

The content of [\cite{R1982} Remarks:~1. p.455] can be stated as follows.

For $G\in\mathbf{fGr}$, a finite group, let $X:=|G|$ be the underlying set (alphabet) with elements (letters) $|g|$. Let $j_G:k\langle |G|\rangle\rightarrow G$ be the canonical monoid epimorphism given by $|g|\mapsto g$, and define 
\begin{equation}
\label{group_code}
C^*_G:=j_G^{-1}(e),
\end{equation}
where $e\in G$ is the neutral element.
\begin{prop}[\cite{R1982}]
Let $G\in\mathbf{fGrp}$ and $C^*_G$ as in~(\ref{group_code}). Then $C^*_G$ is generated by a rational biprefix code $C_G$ with the syntactic algebra $\mathfrak{A}_{C^*_G}$ being isomorphic to $k[G]$, where $k[G]$ is the group algebra of $G$, i.e. we have the commutative diagram
\[
\begin{xy}
  \xymatrix{
      G\ar[r]\ar[d] & \text{$C_G$ code: rational, biprefix} \ar[d] \\
                            k[G]\ar[r]  & \mathfrak{A}_{C^*_G}:\text{semisimple, finite-dimensional}
               }
\end{xy}
\]
\end{prop}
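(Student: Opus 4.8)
I would separate the coding-theoretic claims from the algebra isomorphism. First, I would show that $C^*_G=j_G^{-1}(e)$ is a free submonoid of $X^*$ (with $X=|G|$) whose base $C_G$ is biprefix. Since $j_G$ is a monoid morphism and $e$ its identity, $C^*_G$ is certainly a submonoid; the essential point is that it is \emph{biunitary}. Indeed, if $u,uv\in C^*_G$ then $j_G(v)=j_G(u)^{-1}j_G(uv)=e$, so $v\in C^*_G$, and symmetrically $v,uv\in C^*_G$ forces $u\in C^*_G$; here I use crucially that $G$ is a group, so that $j_G(u),j_G(v)$ are invertible. By the classical theory of codes (\cite{BPR2009,BR}) a biunitary submonoid of a free monoid is free, and its base $C_G$, the set of its indecomposable nonempty elements, is a biprefix code with $C_G^*=C^*_G$.

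Next I would establish rationality. As $G$ is finite, the deterministic automaton with state set $G$, which on reading the letter $|g|$ moves from a state $h$ to $hg$, and whose initial and unique terminal state is $e$, accepts a word $w$ exactly when $j_G(w)=e$; thus it recognises $C^*_G$. Finiteness of $G$ makes this a finite automaton, so $\chi_{C^*_G}$ is a recognisable series; equivalently, and this also falls out of the next step, $\dim_k\mathfrak{A}_{C^*_G}<\infty$, so that $C_G$ is a rational code.

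The heart of the proof is the isomorphism $\mathfrak{A}_{C^*_G}\cong k[G]$. I would extend $j_G$ by linearity to a surjective algebra morphism $J_G:k\langle X\rangle\rightarrow k[G]$, and introduce the functional $\lambda:k[G]\rightarrow k$ reading off the coefficient of $e$, i.e. $\lambda(\sum_g c_g g)=c_e$. Then $\chi_{C^*_G}=\lambda\circ J_G$, since for a word $w$ the element $J_G(w)=j_G(w)$ is a single group element and $\lambda(j_G(w))=1$ precisely when $j_G(w)=e$. This places us in exactly the configuration of Lemma~\ref{finite_alphabet}: a surjective algebra map followed by a linear functional. To conclude $\ker(J_G)=I_{\chi_{C^*_G}}$, and hence $\mathfrak{A}_{C^*_G}=k\langle X\rangle/I_{\chi_{C^*_G}}\cong k[G]$, I must check that the hyperplane $H:=\ker\lambda$ contains no nonzero two-sided ideal of $k[G]$. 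This is the step I expect to be the main obstacle, and it amounts to recognising $\lambda$ as the canonical symmetric Frobenius form of the group algebra: the associated form $B(a,b)=\lambda(ab)$ is automatically associative, and it is nondegenerate because $B(g,h)=\delta_{gh,e}$ pairs each $g$ with $g^{-1}$; equivalently, if $0\neq a=\sum_g c_g g$ lies in a one-sided ideal with $c_{g_0}\neq 0$, then $g_0^{-1}a$ has $e$-coefficient $c_{g_0}\neq 0$, so the ideal escapes $\ker\lambda$. By Definition~\ref{simple_Frob}.3 this says $\ker\lambda$ contains no nonzero left or right ideal, a fortiori no nonzero two-sided ideal. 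The diagram chase of Lemma~\ref{finite_alphabet}, using Lemma~\ref{surjective_image} to see that $J_G(I_{\chi_{C^*_G}})$ is a two-sided ideal inside $H$ and hence zero, then yields $\ker(J_G)=I_{\chi_{C^*_G}}$ and the isomorphism.

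Finally, the stated properties of $\mathfrak{A}_{C^*_G}\cong k[G]$ follow at once: it is finite-dimensional, of dimension the order of $G$, and semisimple, either by Maschke's theorem since $\operatorname{char}(k)=0$, or independently from the rational biprefix code theorem of \cite{R1982} quoted above, $C_G$ being rational and biprefix.
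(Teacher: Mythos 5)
Your proof is correct, but note that the paper does not actually prove this proposition at all: it is quoted from Reutenauer [\cite{R1982}, Remarks 1, p.~455], so your argument fills in what the paper delegates to a citation rather than paralleling an internal proof. Your route is the natural one, and it correctly reuses the paper's own machinery: the biunitarity argument (valid precisely because $G$ is a group, so $j_G(u)$, $j_G(v)$ are invertible) plus Sch\"utzenberger's theorem on unitary submonoids gives freeness of $C^*_G$ and biprefixity of its base; the Cayley automaton with state set $G$ gives recognisability; and the identification $\mathfrak{A}_{C^*_G}\cong k[G]$ is exactly the diagram chase of Lemma~\ref{finite_alphabet}, via Lemma~\ref{surjective_image}, once you verify that the coefficient-of-identity functional $\lambda$ is a Frobenius form on $k[G]$ in the sense of Definition~\ref{simple_Frob}, i.e.\ that $\ker(\lambda)$ contains no nonzero one-sided ideal --- which you do, and which is indeed the crux. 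One cosmetic point: your displayed computation (that $g_0^{-1}a$ has $e$-coefficient $c_{g_0}$) covers left ideals; for right ideals one multiplies on the right by $g_0^{-1}$, or simply invokes that $\lambda$ is a trace so the two cases coincide. Your closing alternative for semisimplicity (Maschke in characteristic zero, or Reutenauer's theorem on rational biprefix codes applied to the already-established rationality and biprefixity of $C_G$) is also sound, and finite-dimensionality is immediate from the isomorphism.
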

We have the following statement.
\begin{prop}
Let $k$ be an algebraically closed field of characteristic zero.
The $k$-functor $F:\mathbf{fGrp}\rightarrow\operatorname{LTFT}_k$ factorises through the category of rational biprefix codes, i.e. the diagram
\[
\begin{xy}
  \xymatrix{
      \mathbf{fGrp}\ar[r]\ar[d] &\operatorname{LTFT}_k \\
                            \{\text{code: rational, biprefix}\}\ar[ur]   &                }
\end{xy}
\]
commutes.
\end{prop}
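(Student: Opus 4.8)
The plan is to exhibit the two functors making up the factorisation explicitly and then to check that the resulting triangle commutes, using the earlier results as black boxes. First I would pin down the functor $F$ itself: on objects it sends a finite group $G$ to the LTFT determined by its group algebra $k[G]$. This is well defined precisely because $\operatorname{char}(k)=0$: by Maschke's theorem $k[G]$ is a finite-dimensional semi-simple $k$-algebra, hence a fully dualisable object of $\mathbf{Alg}_k^2$, and by the theorem of \cite{BP1993,FHK1994} recalled above such objects are exactly the LTFTs. Thus $F$ is the composite $G\mapsto k[G]\in\mathbf{fdAlg}_k^2$, followed by the equivalence $\mathbf{fdAlg}_k^2\leftrightarrow\operatorname{LTFT}_k$.

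Next I would construct the downward functor $\mathbf{fGrp}\rightarrow\{\text{rational biprefix codes}\}$. On objects this is the assignment $G\mapsto C_G$ supplied by the preceding Proposition (Reutenauer), where $C_G$ is the rational biprefix code generating $C^*_G=j_G^{-1}(e)$ and $\mathfrak{A}_{C^*_G}\cong k[G]$. On morphisms, a group homomorphism $\varphi\colon G\to G'$ induces a map of alphabets $|G|\to|G'|$ and hence a monoid morphism of the free monoids; since $\varphi(e)=e'$, this morphism carries $C^*_G=j_G^{-1}(e)$ into $C^*_{G'}=j_{G'}^{-1}(e')$, which is the morphism of codes I assign to $\varphi$. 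The diagonal functor $\{\text{rational biprefix codes}\}\rightarrow\operatorname{LTFT}_k$ then sends a code $C$ to the LTFT of its syntactic algebra $\mathfrak{A}_{C^*}$; this is well defined because, by the theorem on rational biprefix codes, $\mathfrak{A}_{C^*}$ is a finite-dimensional semi-simple algebra and therefore lies in $\mathbf{fdAlg}_k^2\leftrightarrow\operatorname{LTFT}_k$. On objects the composite along the two lower edges then agrees with $F$: the code $C_G$ is sent to the LTFT of $\mathfrak{A}_{C^*_G}\cong k[G]$, which is $F(G)$, using that the equivalence $\mathbf{fdAlg}_k^2\leftrightarrow\operatorname{LTFT}_k$ takes isomorphic algebras to equivalent LTFTs.

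The main obstacle is the morphism level rather than the object level. I expect the real work to be in checking that Reutenauer's isomorphism $\mathfrak{A}_{C^*_G}\cong k[G]$ is natural in $G$, i.e. that it intertwines the algebra map $k[\varphi]$ induced by a group homomorphism with the map on syntactic algebras induced by the corresponding morphism of codes constructed above; this forces one to fix, once and for all, a definition of morphisms in the category of rational biprefix codes (most naturally the alphabet maps above, or equivalently the induced morphisms of syntactic algebras). A secondary subtlety is that, since $\mathbf{Alg}_k^2$ and $\operatorname{LTFT}_k$ are bicategories and the underlying cobordism and LTFT classifications are equivalences, the triangle will commute up to a coherent natural $2$-isomorphism rather than on the nose, and one should record this explicitly. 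Once the naturality of $\mathfrak{A}_{C^*_G}\cong k[G]$ is established, commutativity of the diagram up to such a natural isomorphism follows formally from the two functorial constructions above.
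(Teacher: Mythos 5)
Your proposal is correct and follows essentially the same route the paper intends: the paper gives no separate proof, because the factorisation is precisely the composite $G\mapsto C_G\mapsto \mathfrak{A}_{C^*_G}\cong k[G]$ supplied by the preceding Proposition of Reutenauer, combined with the theorem that rational biprefix codes have finite-dimensional semi-simple syntactic algebras and the identification of $\mathbf{fdAlg}_k^2$ with $\operatorname{LTFT}_k$. Your additional points --- functoriality on morphisms via the alphabet maps, naturality of the isomorphism $\mathfrak{A}_{C^*_G}\cong k[G]$, and commutativity only up to coherent $2$-isomorphism --- are exactly the details the paper leaves implicit, and you correctly identify them as the places where the remaining work lies.
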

\begin{rem}
Let $A\in\mathbf{Frob}$ such that $\lambda(1_A)=1_k$. Then $(A,\lambda)$ defines a non-commutative probability space. In particular, $(k[G],\phi)$ is a non-commutative probability space with a faithful trace $\phi$. Further, if we restrict to permutation groups then we obtain a natural relation with free probability theory, cf.~\cite{F2018}. 
\end{rem}

\section{Two-cobordism and second order monadic logic}

Here we show that the rational series describing the algebras which are equivalent to the topological field theories are describable by a restricted version of monadic second order logic (MSO). This is possible by an extension of the {\bf Büchi-Elgot Theorem} given by M. Droste and P. Gastin~\cite{DG2009}. For the necessary facts cf.~\cite{BZ2012,DG2009,Lib2004} and in particular for the definition of 
{\bf restricted weighted second order monadic logic} (rwMSO)~\cite{DG2009} or the lecture notes~\cite{BZ2012}.
\begin{thm}[\cite{DG2009}]
Let $X$ be a finite alphabet and $\operatorname{char}(k)=0$. Then for $S\in k\langle\langle X\rangle\rangle$ the following equality holds:
$S$ is $\operatorname{rwMSO}(k,X)$-definable iff $S$ is rational (recognisable).
\end{thm}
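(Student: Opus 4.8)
The plan is to prove this as a weighted analogue of the B\"uchi--Elgot--Trakhtenbrot theorem, establishing the two implications separately by relating weighted finite-state automata to formulas of the restricted logic.

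First, for the direction that every recognisable (rational) series is $\operatorname{rwMSO}(k,X)$-definable, I would start from a weighted automaton $\mathcal{A}=(Q,\iota,\mu,\tau)$ recognising $S$, where $\mu$ assigns transition weights in $k$ and $\iota,\tau$ are the initial and final weight functions. The idea is to encode runs of $\mathcal{A}$ by second-order set variables: for the state set $Q$ one introduces variables $(Y_q)_{q\in Q}$ constrained to partition the positions of a word, so that a position $x$ lies in $Y_q$ precisely when the run occupies state $q$ after reading the letter at $x$. The weight of a single run is the product over all positions of the corresponding transition weights, together with the boundary contributions coming from $\iota$ and $\tau$; this product is expressed by a restricted weighted universal first-order quantifier ranging over positions, and the sum over all accepting runs is obtained by existentially quantifying over the $(Y_q)$. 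One must check that this encoding stays inside the restricted fragment, i.e.\ that the universal quantifier is applied only to recognisable step functions assembled from the transition weights.

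The converse, that every $\operatorname{rwMSO}(k,X)$-definable series is recognisable, I would prove by structural induction on the syntax, invoking the closure properties of recognisable series. The atomic formulas yield recognisable step functions; weighted disjunction corresponds to the sum of series, and existential quantification over first-order and set variables corresponds to projections (relabelings) of the extended alphabet recording the variable assignment. These are handled throughout by the standard cylindrification and projection constructions, under which recognisability is preserved.

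The main obstacle --- and the very reason the logic must be restricted --- lies in the product operations: weighted conjunction and, above all, the universal first-order quantifier $\prod_x\varphi$. In the unrestricted setting these can produce series whose coefficients grow faster than any recognisable series, so recognisability fails. The heart of the argument is therefore the lemma that applying the restricted universal quantifier to a recognisable step function yields a recognisable series, proved by an explicit automaton construction in which the step values are folded into the transition weights. Establishing that the syntactic restriction of $\operatorname{rwMSO}$ is exactly strong enough to confine every product to this controlled class, while still admitting the automaton encoding of the first direction, is the technical core on which the equivalence rests; for the full combinatorial details I would ultimately defer to~\cite{DG2009}.
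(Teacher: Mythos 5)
Your sketch is correct and follows exactly the route of the cited source: the paper itself offers no proof of this theorem but imports it wholesale from~\cite{DG2009}, and your two directions (encoding runs of a weighted automaton by set variables $(Y_q)_{q\in Q}$ with a restricted universal quantifier computing the run weight, and conversely structural induction using closure under sum, projection/cylindrification, and the key lemma that restricted universal quantification of recognisable step functions preserves recognisability) are precisely the structure of the Droste--Gastin argument. Since you, like the paper, ultimately defer the combinatorial details to~\cite{DG2009}, there is nothing to flag beyond noting that the universal \emph{second-order} quantifier must likewise be restricted (to unweighted/boolean formulas) in the fragment, a point your sketch leaves implicit.
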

\begin{cor}
Let $X$ be a finite alphabet and $S\in k\langle X\rangle^*$ a formal power series. Then the following equivalences hold:
\begin{equation*}
\dim_k(\mathfrak{A}_S)<\infty\Leftrightarrow \text{$S$ is rational}\Leftrightarrow\text{$S$ is $\operatorname{rwMSO}(k,X)$ definable}.
\end{equation*}
\end{cor}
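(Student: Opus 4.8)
The plan is to show a chain of two biconditionals, each of which has essentially been established earlier in the excerpt, and to assemble them. The statement to prove is
$$
\dim_k(\mathfrak{A}_S)<\infty\Leftrightarrow \text{$S$ is rational}\Leftrightarrow\text{$S$ is $\operatorname{rwMSO}(k,X)$ definable},
$$
so I would treat the two equivalences separately and then chain them, since equivalence is transitive.

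First I would dispatch the first biconditional, $\dim_k(\mathfrak{A}_S)<\infty\Leftrightarrow S$ is rational. This is exactly the content of Theorem~\ref{Syntactic_hyperplane}, part~2, which asserts that a formal series $S\in k\langle X\rangle^*$ is rational if and only if its syntactic algebra $\mathfrak{A}_S$ is finite-dimensional as a $k$-vector space. Since $X$ is a finite alphabet by hypothesis, the theorem applies verbatim, and nothing further is needed here; I would simply cite it.

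Next I would handle the second biconditional, $S$ is rational $\Leftrightarrow S$ is $\operatorname{rwMSO}(k,X)$ definable. This is precisely the B\"uchi--Elgot--type theorem of Droste and Gastin recalled just above the Corollary (Theorem~\cite{DG2009}), which states that for $\operatorname{char}(k)=0$ and finite $X$, a series $S\in k\langle\langle X\rangle\rangle$ is $\operatorname{rwMSO}(k,X)$-definable iff it is rational (recognisable). The one point meriting a remark is the compatibility of hypotheses: the Corollary restricts to $S\in k\langle X\rangle^*$ (a linear functional on polynomials, i.e. an element of $k\langle\langle X\rangle\rangle$) and the standing assumption $\operatorname{char}(k)=0$ is in force, so the Droste--Gastin theorem applies directly.

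Finally, transitivity of logical equivalence yields the full three-term chain, completing the argument. The main obstacle, such as it is, is purely terminological rather than mathematical: one must verify that the notion of rationality (equivalently recognisability) used in Theorem~\ref{Syntactic_hyperplane} coincides with the recognisability hypothesis appearing in the Droste--Gastin theorem. Since both refer to the standard notion of recognisable weighted series over a finite alphabet, and the parenthetical ``(recognisable)'' in the Droste--Gastin statement makes this identification explicit, no genuine difficulty arises; the Corollary is an immediate assembly of the two quoted theorems.
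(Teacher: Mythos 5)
Your proposal is correct and is exactly the argument the paper intends: the corollary is the immediate concatenation of Theorem~\ref{Syntactic_hyperplane}, part~2 (Reutenauer: rational $\Leftrightarrow$ $\dim_k(\mathfrak{A}_S)<\infty$) with the Droste--Gastin theorem stated just above it (rational $\Leftrightarrow$ $\operatorname{rwMSO}(k,X)$-definable, valid under the standing hypotheses of finite $X$ and $\operatorname{char}(k)=0$). Your additional check that the notion of rationality/recognisability is the same in both cited results is a sensible point of care, and nothing further is needed.
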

The relevance with respect to two-dimensional lattice topological field theories is given by:
\begin{prop}
Every LTFT is $\operatorname{rwMSO}(k)$-definable, i.e. let $A\in\mathbf{fdAlg}_k^2$ be the finite-dimensional semi-simple algebra corresponding to an LTFT and $(X,S)$ a formal power series with syntactic algebra $\mathfrak{A}_S=A$. Then $S$ is $\operatorname{rwMSO}(k)$-definable.
\end{prop}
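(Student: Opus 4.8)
The plan is to assemble the equivalences established earlier into a single implication chain; the only genuine work is to realise the relevant algebra as the syntactic algebra of a recognisable series over a \emph{finite} alphabet and then to invoke the weighted B\"uchi--Elgot theorem. First I would pin down what kind of algebra $A$ is. By the lattice-TFT theorem~\cite{BP1993,FHK1994}, the given LTFT corresponds to a fully dualisable object $A\in\mathbf{fdAlg}_k^2$, and by the Lurie/Schommer-Pries classification~\cite{Lu2009,SP2014} (applicable since $k$ is algebraically closed of characteristic zero) these objects are precisely the finite-dimensional semi-simple $k$-algebras. Thus $A$ is finite-dimensional and semi-simple, say $N:=\dim_k(A)<\infty$.

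Next I would produce the series $S$ and record its syntactic algebra. Applying Proposition~\ref{rationalsyntactic_algebras} in the semi-simple case~(2.) to $A$ yields a finite alphabet $X=\{x_1,\dots,x_N\}$ and a recognisable series $S\in k\langle x_1,\dots,x_N\rangle^*$ with $\mathfrak{A}_S\cong A$; in particular $\dim_k(\mathfrak{A}_S)=N<\infty$. Note that for \emph{any} pair $(X,S)$ satisfying the standing hypothesis $\mathfrak{A}_S=A$ the same dimension count $\dim_k(\mathfrak{A}_S)=\dim_k(A)<\infty$ holds, so the conclusion does not depend on the chosen presentation. By Theorem~\ref{Syntactic_hyperplane}~(2.), finite-dimensionality of $\mathfrak{A}_S$ is equivalent to $S$ being rational; hence $S$ is rational.

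Finally I would pass from rationality to logical definability. Since $X$ is finite and $\operatorname{char}(k)=0$, the theorem of Droste and Gastin~\cite{DG2009}---equivalently, the Corollary preceding this Proposition---asserts that a series over $X$ is rational if and only if it is $\operatorname{rwMSO}(k,X)$-definable. Applying this to the rational $S$ produced above shows that $S$ is $\operatorname{rwMSO}(k)$-definable, which is the claim. I expect the only delicate point to be the middle step: the weighted B\"uchi--Elgot theorem is formulated only for \emph{finite} alphabets, so the crux is that Proposition~\ref{rationalsyntactic_algebras} (built on Lemma~\ref{finite_alphabet}) genuinely realises the abstract semi-simple algebra $A$ as a syntactic algebra over a finite $X$ of size $\dim_k(A)$; without this finiteness the logical characterisation would not apply, and the whole argument would break down at the passage from rational to $\operatorname{rwMSO}(k)$-definable.
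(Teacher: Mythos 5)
Your proposal is correct and matches the paper's (implicit) argument: the paper states this Proposition without a separate proof, precisely because it follows immediately from chaining Proposition~\ref{rationalsyntactic_algebras} (realising the semi-simple algebra $A$ as the syntactic algebra of a recognisable series over a finite alphabet), Theorem~\ref{Syntactic_hyperplane}(2), and the Droste--Gastin Corollary, which is exactly the chain you assemble. Your additional remark that the finiteness of the alphabet is the crux for invoking the weighted B\"uchi--Elgot theorem is a fair and accurate observation, not a deviation from the paper's route.
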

Let us conclude with following observations and remarks. 

The relations we have established in a first step between weighted finite-state automata, second order monadic logic and fully extended two-dimensional topological quantum field theories are at an algebraic level. However, the automata theoretic but also model theoretic part can be described in more intrinsic, i.e. (higher) categorical, terms which is necessary in order to extend and generalise the present results. 

Further, it appears that the relation between logic and $2D$ TFTs should generalise to higher dimensions. Namely, the order of the logic / type theory should parallel the dimension of the cobordisms involved, i.e. we have:
$$
\text{$n$-cobordism $\leftrightarrow n$-order logic/type theory}.
$$

\section*{Acknowledgments}
I would like to thank Owen Gwilliam, Claudia Scheimbauer, Chris Schommer-Pries and Urs Schreiber for fruitful conversations which helped me to acquire the necessary background and for giving me a generous perspective on various aspects of the matters involved and their own related work. I would also like to thank John McKay for his interest and encouragement and Roland Speicher for his support, the conversations during the time as it evolved and his interest. Finally, I would like to thank the MPI in Bonn for its hospitality. This work was supported, at its beginning, by the MPI and presently by the ERC advanced grant ``Noncommutative distributions in free probability".


\begin{thebibliography}{00}
\bibitem{BP1993} C. Bachas, P.M.S. Petropoulos, {\it Topological Models on the Lattice
and a Remark on String Theory Cloning}, Commun. Math. Phys. {\bf 152},191-202 (1993)
\bibitem{BP2012} V. Barbu , T. Precupanu, {\it Convexity and Optimization in Banach Spaces}, 4th edition, Springer Monographs in Mathematics, Springer 2012
\bibitem{BPR2009} J. Berstel, D. Perrin, Ch. Reutenauer, {\it Codes and Automata}, Encyclopedia of Mathematics and its Applications, {\bf 129}, Cambridge University Press, (2009)
\bibitem{BR} J. Berstel, Ch. Reutenauer, {\it  Rational Series and
Their Languages}, EATCS, Monographs in Theoretical Computer Science. Springer 1988
\bibitem{BZ2012} B. Bollig, M. Zeitoun, {\it Weighted Automata}, Lecture Notes, Version of May 15, (2012)
\bibitem{DG2009} M. Droste, P. Gastin, {\it Weighted automata and weighted logics}. In: Handbook of Weighted Automata. Ed. by W. Kuich, H. Vogler, and M. Droste. EATCS Monographs in Theoretical Computer Science. Springer, (2009)
\bibitem{Fl1974} M. Fliess, {\it Matrices de Hankel}, J. Math. pures et appl., {\bf 53},  p. 197-224, 1974
\bibitem{FK2011} R. Friedrich and T. Kato, {\it The Mesoscopic category, Automata and Tropical Geometry}, arXiv:1111.2832v1 (2011)
\bibitem{F2018} R. Friedrich,  preprint (2018)
\bibitem{FHK1994} M. Fukuma, S. Hosono, H. Kawai, {\it Lattice topological field theory in two dimensions}, Comm. Math. Phys. {\bf 161}, 157-175, (1994)
\bibitem{HSchV2016} J. Hesse, C. Schweigert, A. Valentino {\it An equivalence between semisimple symmetric Frobenius algebras and Calabi-Yau categories}, Theory Appl. Categ. {\bf 32}, No. 18, (2017)
\bibitem{Kim2017} Y. Kimura, {\it Noncommutative Frobenius algebras and open-closed duality}, arXiv:hep-th/1701.8382, (2017)
\bibitem{K2003} J. Kock, {\it Frobenius algebras and 2D topological quantum field theories}, No. {\bf 59} of LMSST, Cambridge University Press, (2003). 
\bibitem{L1999} T.-Y. Lam, {\it Lectures on Modules and Rings}, Graduate Texts in Mathematics, Springer 1999
\bibitem{LPf2006}A. Lauda, H. Pfeiffer, {\it State sum construction of two-dimensional open-closed Topological Quantum Field Theories}, arXiv:math/0602047v2 (2006)
\bibitem{Lib2004} L. Libkin, {\it Elements of Finite Model Theory},  Texts in Theoretical Computer Science, An EATCS Series, Springer 2004
\bibitem{Lu2009} J. Lurie, {\it On the classification of topological field theories}, Current developments in mathematics, Int. Press, Somerville, MA, 2009, pp. 129-280. arXiv:0905.0465 (2009)
\bibitem{M2007} J. Morton, {\it Extended TQFT's and Quantum Gravity}
\bibitem{R1980} Ch. Reutenauer, {\it Séries formelles et algèbres syntactiques},  Journal of Algebra, {\bf 66}, 448-483, 1980
\bibitem{R1982} Ch. Reutenauer, {\it Biprefix codes and semisimple algebras}, In: Nielsen M., Schmidt E.M. (eds) Automata, Languages and Programming. ICALP 1982. Lecture Notes in Computer Science, vol 140. Springer, Berlin, Heidelberg 1982
\bibitem{SP2014} C. Schommer-Pries, {\it The classification of two-dimensional extended topological field theories}, arXiv:1112.1000 (2014)
\end{thebibliography}
\end{document}